\newcommand{\hide}[1]{}
\newtheorem{theorem}{Theorem}[section]
\newtheorem{lemma}[theorem]{Lemma}
\newlength\tindent
\title { A Unifying Framework of Accelerated First-Order Approach to Strongly Monotone Variational Inequalities }
\author{
Kevin Huang\thanks{Department of Industrial and System Engineering, University of Minnesota, huan1741@umn.edu}
%\and
\hspace{1cm}
Shuzhong Zhang\thanks{Department of Industrial and System Engineering, University of Minnesota, zhangs@umn.edu}
}
\date{\today}
\begin{document}

\maketitle

\begin{abstract}
    In this paper, we propose a unifying framework incorporating several momentum-related search directions for solving strongly monotone variational inequalities. The specific combinations of the search directions in the framework are made to guarantee the optimal iteration complexity bound of $\mathcal{O}\left(\kappa\ln(1/\epsilon)\right)$ to reach an $\epsilon$-solution, where $\kappa$ is the condition number. This framework provides the flexibility for algorithm designers to train -- among different parameter combinations -- the one that best suits the structure of the problem class at hand. The proposed framework includes the following iterative points and directions as its constituents: the extra-gradient, the optimistic gradient descent ascent (OGDA) direction (aka ``optimism''), the ``heavy-ball'' direction, and Nesterov's extrapolation points. As a result, all the afore-mentioned methods become the special cases under the general scheme of extra points. We also specialize this approach to strongly convex minimization, and show that a similar extra-point approach achieves the optimal iteration complexity bound of $\mathcal{O}(\sqrt{\kappa}\ln(1/\epsilon))$ for this class of problems.

    \vspace{3mm}

    \noindent\textbf{Keywords:} variational inequality, minimax saddle-point, strongly convex optimization, accelerated gradient methods.
\end{abstract}

\section{Introduction}
\label{sec:intro}
In this paper, we are concerned with solving the following variational inequality (VI) problem: Given a constraint set $\mathcal{Z}\subseteq \mathbb{R}^n$ and a mapping $F:\mathcal{Z}\rightarrow \mathbb{R}^n$, find $z^*\in\mathcal{Z}$ such that
\begin{equation}
    \label{vi-prob}
    F(z^*)^{\top}(z-z^*)\geq 0,\quad\forall z\in\mathcal{Z}.
\end{equation}
The study of VI problem \eqref{vi-prob} dates back to 1960's, first in the form of complementarity problem (CP) which models various equilibrium settings such as the economic supply-demand equilibria, traffic equilibria, and generally the Nash equilibria. For an extensive introduction to VI and its applications, we refer the readers to~\cite{facchinei2007finite} and the references therein. It may be helpful to recite some of the famous VI modeling applications:

\begin{description}
\item[Complementarity Problem (CP)]

Finding $z^* \ge 0$ such that $F(z^*) \ge 0$ and $F_i(z^*)z^*_i=0$ $\forall i$ is equivalent to finding $z^* \ge 0$ such that
\[
 F(z^*)^\top (z - z^*) \ge 0, \,\,\, \forall z \ge 0.
\]

\item[Equation Solving] Finding $F(z^*)=0$ is equivalent to finding $z^*$ such that
\begin{equation*}
F(z^*)^\top (z - z^*) \ge 0, \,\,\, \forall z .
\end{equation*}

\item[Constrained Optimization] Finding a first-order optimal solution for
 $   \min\limits_{x\in\mathcal{X}}f(x)$,
where $f(x)$ is differentiable and $\mathcal{X}$ is a convex set, is equivalent to finding $x^* \in \mathcal{X}$ such that
\begin{equation*}
    \nabla f(x^*)^{\top}(x-x^*)\geq 0,\quad \forall x \in\mathcal{X}.
\end{equation*}

\item[The Minimax Saddle-Point Problem] The first-order condition for
\begin{equation*}
    \min\limits_{x\in\mathcal{X}}\max\limits_{y\in\mathcal{Y}}f(x,y),
\end{equation*}
where $f(x,y)$ is differentiable in both $x,y$ and $\mathcal{X},\mathcal{Y}$ are both convex, is to find $(x^*,y^*) \in \mathcal{X} \times \mathcal{Y}$ such that
\begin{equation*}
    \begin{pmatrix}
    \nabla_x f(x^*,y^*)\\
    -\nabla_y f(x^*,y^*)
    \end{pmatrix}^{\top}
    \begin{pmatrix}
    x-x^*\\
    y-y^*
    \end{pmatrix}\geq 0,\quad\forall (x,y) \in\mathcal{X} \times \mathcal{Y}.
\end{equation*}

\end{description}

In this paper, we consider the VI model~\eqref{vi-prob} where $\mathcal{Z}$ is a closed convex set. Moreover, throughout this paper the following two conditions are assumed:
\begin{equation}
    \label{pre:strong-mono}
    \left(F(z)-F(z')\right)^{\top}(z-z')\geq \mu\|z-z'\|^2, \quad\forall z,z'\in\mathcal{Z},
\end{equation}
for some $\mu>0$, and
\begin{equation}
    \label{pre:lip}
    \|F(z)-F(z')\|\leq L\|z-z'\|,\quad\forall z,z'\in \mathcal{Z},
\end{equation}
for some $L \ge \mu>0$. Condition~\eqref{pre:strong-mono} is known as the {\it strong monotonicity}\/ of $F$, while Condition~\eqref{pre:lip} is known as the {\it Lipschitz continuity}\/ of $F$.

Let us denote
    $\kappa := \frac{L}{\mu}\ge 1$ and  $\sigma :=\frac{\mu}{L}=\frac{1}{\kappa}\le 1$.
Parameter $\kappa$ is also usually known as the condition number of \eqref{vi-prob}.

There exists a considerable amount of literature on the theory and algorithms for solving finite-dimensional VI models. Many of the results before 2007 can be found in the celebrated monograph of Facchinei and Pang~\cite{facchinei2007finite}. Recent years have seen a renewed interest on the topic, due to VI's connections to the first-order methods for optimization in the context of machine learning and statistics. The current paper aims to propose a new framework of the first-order algorithms for finite-dimensional VI in that context. Our study revolves around the issue of {\it iteration complexity}\/ for solving model~\eqref{vi-prob}. To set the stage for our discussion, let us first note that for the strongly monotone VI model~\eqref{vi-prob} a unique solution $z^*$ exists. We call a solution $z$ to be an $\epsilon$-solution if $\|z-z^*\| \le \epsilon$, where $\epsilon>0$ is a given precision. The so-called iteration complexity analysis for an algorithm is to upper bound the number of iterations required to find an $\epsilon$-solution for any VI instances within the class of strongly monotone VI. Before proceeding, we shall note a recent {\it lower bound}\/ result. Zhang {\it et al.}~\cite{junyu2019} considers
strongly-convex-strongly-concave saddle point problems, and shows that there exists a class of problems in such a way that no first-order algorithm can find an $\epsilon$-solution in less than \begin{equation*}
    \Omega\left(\sqrt{\frac{L_x}{\mu_x}+\frac{L^2_{xy}}{\mu_x\mu_y}+\frac{L_y}{\mu_y}}\ln\left(\frac{1}{\epsilon}\right)\right)
\end{equation*}
iterations, where
$L_x$ and $L_y$ are the Lipschitz constants for $\nabla_xf(\cdot,y)$ and $\nabla_yf(x,\cdot)$ respectively for fixed $y/x$,
and $L_{xy}$ is the Lipschitz constant for $\nabla_xf(x,\cdot)/\nabla_yf(\cdot,y)$ for fixed $x/y$.  As we noted earlier, the above is a special case of the strongly monotone VI model.
In the context of VI, if $L_x=L_y=L_{xy}=L$ and $\mu_x=\mu_y=\mu$, then the above lower bound can be regarded as a lower bound result for general strongly monotone VI problems as well. Therefore, a lower bound for the strongly monotone VI is
    $\Omega\left(\kappa\ln\left(\frac{1}{\epsilon}\right)\right)$.
This is in sharp contrast to the case of strongly convex optimization, where the lower bound for the iteration complexity is $\Omega\left(\sqrt{\kappa} \ln\left(\frac{1}{\epsilon}\right)\right)$ and Nesterov's accelerated gradient method has achieved this iteration bound, hence known as an ``optimal'' algorithm (see~\cite{nesterov2003introductory}).

On the side of algorithm developments for VI, classical methods include the projection algorithm (see~\cite{facchinei2007finite}), the proximal point method as proposed by Martinet \cite{martinet1970breve} and popularized by Rockafeller \cite{rockafellar1976monotone}, and the matrix splitting method of Tseng \cite{tseng1995linear}. In this paper we shall focus on the methods involving projection of search directions onto the feasible set. In fact, there have been quite some interesting variants and configurations of the vanilla gradient projection in the literature. For example, the so-called {\it extra-gradient method}\/ was proposed by Korpelevich in 1976~\cite{korpelevich1976extragradient} for the saddle point problems
which was shown to be linearly convergent for strongly monotone VI by Tseng in~\cite{tseng1995linear}.
Another interesting method, known as the optimistic gradient descent ascent (OGDA) method proposed by Popov in 1980~\cite{popov1980modification}. % as a variant of extra-gradient method.
That method has a close relation with the so-called momentum-related methods, and we shall come back to this point later. The convergence of OGDA for strongly-convex-strongly-concave saddle point problem was studied by Mokhtari {\it et al.}\/ in \cite{mokhtari2019unified} (the convergence for the convex-concave case was studied by the same authors in \cite{mokhtari2020convergence}) and a related proof for strongly monotone VI can be found in Palaniappan and Bach~\cite{palaniappan2016stochastic}. The above mentioned extra-gradient method and OGDA achieve an iteration complexity of $\mathcal{O}(\kappa\ln(1/\epsilon))$ for an $\epsilon$-solution. In this sense, the extra-gradient method and the OGDA method are optimal for solving the strongly monotone VI problems.
In this paper, we refer to these methods which attain the lower bound complexity as {\it accelerated first-order}\/ methods, since they gain an accelerated rate comparing to the vanilla projection method, which has an $\mathcal{O}(\kappa^2\ln(1/\epsilon))$ complexity bound. More discussions on this issue can be found in Section \ref{sec:proj-and-extra-grad}.

Speaking of acceleration, Nesterov proposed the first method of this type in 1983 for convex optimization~\cite{nesterov1983method}.
There has been an intensive recent research effort on the subject; see the recent monograph \cite{d2021acceleration} for a comprehensive survey. We devote Section~\ref{sec:opt} of our paper to the technical background of the subject in the context of optimization.
At this point, we shall introduce two particular methods which we shall borrow to solve the VI models.
The first one is Nesterov's gradient acceleration
method itself~\cite{nesterov1983method}, which will also be referred to as ``Nesterov's accelerated method'' or ``Nesterov's method'' in the context of VI in this paper. The method was initially proposed for non-strongly (but smooth) convex optimization, yielding an $\mathcal{O}(1/\sqrt{\epsilon})$ iteration complexity bound, compared to normal gradient descent method with the bound $\mathcal{O}(1/\epsilon)$. For solving strongly convex optimization, the method can be modified to yield an $\mathcal{O}(\sqrt{\kappa}\ln(1/\epsilon))$ iteration bound~\cite{nesterov2003introductory}. These bounds are shown to be optimal in the context of convex optimization. The second one predates Nesterov's acceleration, and leverages on the so-called momentum of the dynamics in the gradient fields. The method was introduced by Polyak in 1964~\cite{polyak1964some} and is more commonly known by the name of ``heavy-ball'' method, which can be shown to yield an iteration complexity $\mathcal{O}(\sqrt{\kappa}\ln(1/\epsilon))$ for minimizing a strongly convex quadratic objective function.

This paper aims for a unified scheme of {\it extra points} to solve strongly monotone VI models, in that all the insightful search directions based on the ideas of momentum (``heavy-ball'' and ``optimism'') and extra steps (Nesterov's extrapolation point and the extra-gradient step) are allowed to collaborate to yield an even better and steadier performance. For ease of referencing, we shall call the proposed approach {\it the extra-point method}. The main motivation underlying the scheme is to have those individual search steps tuned from one problem class to the other, and we believe that it makes sense to learn from the data and to find out a suitable configuration of those first-order directions for each given problem class needing to be tackled. It is therefore meaningful to find an inclusive and general enough framework under which the {\it optimal}\/ iteration bound is still achievable. The main theme of this paper is to study strongly monotone VI. However, in Section~\ref{sec:opt} we also analyze an important special case: convex optimization, and we shall extend the general framework to this important subclass of problems. Our proposed approach relies on a dynamic process to manage the extra sequences of iterative points to engage all the search directions mentioned above.

The rest of the paper is organized as follows. In Section \ref{sec:his} we shall illustrate what motivates the ideas behind the extra-point approach, by analyzing the classical projection method, the extra-gradient method, and other accelerated methods. In Section \ref{sec:vi} we present the extra-point approach for strongly monotone VI with an iteration complexity analysis.
In Section \ref{sec:opt}, we present an extra-point scheme to solve strongly convex optimization with enhanced iteration complexity accordingly.

\section{An Analysis of the Projection Method}
\label{sec:his}

In this section we shall conduct an analysis revealing the mechanism leading to various phenomena of acceleration for strongly monotone VI. The analysis motivates the developments to be presented in Section \ref{sec:vi}.
First, we introduce the projection operator $P_{\mathcal{Z}}(\cdot)$ and its properties:
\begin{equation}
    \label{pre:proj}
    P_{\mathcal{Z}}(z) = \arg\min\limits_{z'\in\mathcal{Z}}\|z-z'\|^2.
\end{equation}
We note the following well-known properties (the non-expansiveness and the 1-co-coerciveness) for the projection operator (cf.~e.g.\ Proposition 4.4 in  \cite{bauschke2011convex}).
For a convex set $\mathcal{Z}$ and the projection operator $P_{\mathcal{Z}}(\cdot)$ defined as \eqref{pre:proj}, we have:
\begin{eqnarray}
    \label{pre:non-exp}
 \mbox{ Non-expansiveness: } & &   \|P_{\mathcal{Z}}(z)-P_{\mathcal{Z}}(z')\|\leq \|z-z'\|, \quad\forall z,z' \\
 \mbox{ The 1-co-coerciveness: } & &
    \label{1-co-coer}
    \|P_{\mathcal{Z}}(z)-P_{\mathcal{Z}}(z')\|^2\leq \left(P_{\mathcal{Z}}(z)-P_{\mathcal{Z}}(z')\right)^{\top}(z-z'), \quad\forall z,z'.
\end{eqnarray}

\subsection{Vanilla gradient projection and the extra-gradient method}
\label{sec:proj-and-extra-grad}

Let us first take a close look at the vanilla projection method:
\[
    z^{k+1}=P_{\mathcal{Z}}\left(z^k-\alpha F(z^k)\right),\quad k=0,1,2,....
\]
A standard analysis gives the following estimation: %result:
\begin{eqnarray*}
         \|z^{k+1}-z^*\|^2 & = &\|P_{\mathcal{Z}}\left(z^k-\alpha F(z^k)\right)- P_{\mathcal{Z}}\left(z^*-\alpha F(z^*) \right)\|^2  \\
         & \leq & \|z^k-\alpha F(z^k)-z^*+\alpha F(z^*)\|^2 \\
         & = & \|z^k-z^*\|^2-2\alpha(z^k-z^*)^{\top}\left(F(z^k)-F(z^*)\right)+\alpha^2\|F(z^k)-F(z^*)\|^2 \\
         & \leq & (1-2\alpha\mu+\alpha^2L^2)\|z^k-z^*\|^2 \\
         & \overset{\alpha:=\frac{\mu}{L^2}}{=} & (1-\sigma^2)\|z^k-z^*\|^2.
\end{eqnarray*}
The first inequality uses the non-expansiveness of the projection \eqref{pre:non-exp}, and the second inequality uses the strong monotonicity \eqref{pre:strong-mono} and Lipschitz continuity \eqref{pre:lip}. This results in a linear convergence rate of $1-\sigma^2$.

The term
$-2\alpha(z^k-z^*)^{\top}\left(F(z^k)-F(z^*)\right)$ would guarantee a linear rate $1-\alpha\mu$
if there were no $\alpha^2\|F(z^k)-F(z^*)\|^2$ term. The presence of the last term
causes a smaller step size of $\alpha=\frac{\mu}{L^2}$, leading to a reduction rate of $1-\sigma^2$.

The idea behind the extra-gradient method is to introduce another point to evaluate the gradient so as to avoid dealing with the last term as such. The extra-gradient method proceeds as follows:
\begin{equation}
    \label{ex-grad-1} \left\{
    \begin{array}{cll}
         z^{k+0.5} &=& z^k-\alpha F(z^k), \\
         z^{k+1} &=& P_{\mathcal{Z}}\left(z^k-\alpha F(z^{k+0.5})\right),\quad k=0,1,2,...
    \end{array} \right.
\end{equation}

Similarly, the analysis then goes:
\begin{eqnarray}
    \label{ex-grad-1-step-1}
         \|z^{k+1}-z^*\|^2 & \leq & \|P_{\mathcal{Z}}\left(z^k-\alpha F(z^{k+0.5})\right)-P_{\mathcal{Z}}(z^*)\|^2  \nonumber \\
         & \leq & \|z^k-\alpha F(z^{k+0.5})-z^*\|^2 \nonumber \\
         & = & \|z^k-z^*\|^2-2\alpha(z^k-z^*)^{\top} F(z^{k+0.5})+\alpha^2\|F(z^{k+0.5})\|^2 \nonumber \\
         & = & \|z^k-z^*\|^2-2\alpha(z^{k+0.5}-z^*)^{\top}F(z^{k+0.5})+\alpha^2\|F(z^{k+0.5})\|^2 \nonumber \\
         && -2\alpha(z^k-z^{k+0.5})^{\top}F(z^{k+0.5}).
\end{eqnarray}
Note that
\begin{eqnarray*}
         -2\alpha(z^{k+0.5}-z^*)^{\top}F(z^{k+0.5}) & \leq & -2\alpha(z^{k+0.5}-z^*)^{\top}F(z^{*})-2\alpha\mu\|z^{k+0.5}-z^*\|^2 \\
         & \leq & -2\alpha\mu\|z^{k+0.5}-z^*\|^2 \\
         & \leq & -\alpha\mu\|z^k-z^*\|^2+2\alpha\mu\|z^k-z^{k+0.5}\|^2
\end{eqnarray*}
and
\begin{eqnarray*}
         \alpha^2\|F(z^{k+0.5})\|^2 & = & \alpha^2\|F(z^{k+0.5})-F(z^k)+F(z^k)\|^2  \\
         & = & \alpha^2\|F(z^{k+0.5})-F(z^k)+\frac{1}{\alpha}(z^k-z^{k+0.5})\|^2 \\
         & = & \alpha^2\|F(z^{k+0.5})-F(z^k)\|^2+2\alpha\left(F(z^{k+0.5})-F(z^k)\right)^{\top}(z^k-z^{k+0.5})+\|z^k-z^{k+0.5}\|^2 \\
         & \leq & 2\alpha\left(F(z^{k+0.5})-F(z^k)\right)^{\top}(z^k-z^{k+0.5})+(\alpha^2L^2+1)\|z^k-z^{k+0.5}\|^2
\end{eqnarray*}
and
\begin{eqnarray*}
         -2\alpha(z^k-z^{k+0.5})^{\top}F(z^{k+0.5}) & = & -2\alpha(z^k-z^{k+0.5})^{\top}\left(F(z^{k+0.5})-F(z^k)+F(z^k)\right) \\
         & = & -2\alpha(z^k-z^{k+0.5})^{\top}\left(F(z^{k+0.5})-F(z^k)+\frac{1}{\alpha}(z^k-z^{k+0.5})\right) \\
         & = & -2\alpha(z^k-z^{k+0.5})^{\top}\left(F(z^{k+0.5})-F(z^k)\right)-2\|z^k-z^{k+0.5}\|^2.
\end{eqnarray*}
By substituting the above three bounds into \eqref{ex-grad-1-step-1} we obtain
\begin{eqnarray*}
         \|z^{k+1}-z^*\|^2 & \leq & (1-\alpha\mu)\|z^k-z^*\|^2+(\alpha^2L^2+2\alpha\mu-1)\|z^{k}-z^{k+0.5}\|^2  \\
         & \overset{\alpha:=\frac{1}{4L}}{\leq} & \left(1-\frac{\sigma}{4}\right)\|z^{k}-z^*\|^2.
\end{eqnarray*}

Note that in \eqref{ex-grad-1}, we assumed the domain of $F(\cdot)$ is %from $\mathcal{Z}$ to
$\mathbb{R}^n$, since the point $z^{k+0.5}$ is not necessarily in $\mathcal{Z}$. The main sequence $\{z^k\}$ for $k=0,1,2,...$ is, however, still within $\mathcal{Z}$. In order to deal with the case where the mapping $F(\cdot)$ is \textit{only} defined on $\mathcal{Z}$, another projection operator should also be applied to the sequence $\{z^{k+0.5}\}$, leading to the following variant of the extra-gradient method:
\begin{equation}
    \label{ex-grad-2} \left\{
    \begin{array}{ccl}
         z^{k+0.5} &=& P_{\mathcal{Z}}\left(z^k-\alpha F(z^k)\right), \\
         z^{k+1}   &=& P_{\mathcal{Z}}\left(z^k-\alpha F(z^{k+0.5})\right),\quad k=0,1,2,...
    \end{array} \right.
\end{equation}

With a slight modification, the same convergence result can be shown to hold. At the first step, we shall use the 1-co-coerciveness \eqref{1-co-coer} instead of non-expansiveness of the projection:
\begin{eqnarray}
    \label{ex-grad-2-step-1}
         \|z^{k+1}-z^*\|^2 & = & \|P_{\mathcal{Z}}\left(z^k-\alpha F(z^{k+0.5})\right)-P_{\mathcal{Z}}(z^*)\|^2  \nonumber \\
         & \leq & (z^{k+1}-z^*)^{\top}\left(z^k-z^*-\alpha F(z^{k+0.5})\right) \nonumber \\
         & = & \frac{1}{2}\|z^{k+1}-z^*\|^2+\frac{1}{2}\|z^k-z^*\|^2-\frac{1}{2}\|z^{k+1}-z^k\|^2 \nonumber \\
         && - \alpha(z^{k+1}-z^{k+0.5})^{\top}F(z^{k+0.5})-\alpha(z^{k+0.5}-z^{*})^{\top}F(z^{k+0.5}) .
\end{eqnarray}
We bound the last term with strong monotonicity of $F(\cdot)$:
\begin{eqnarray*}
         -\alpha(z^{k+0.5}-z^*)^{\top}F(z^{k+0.5}) & \leq & -\alpha(z^{k+0.5}-z^*)^{\top}F(z^{*})-\alpha\mu\|z^{k+0.5}-z^*\|^2 \\
         & \leq & -\alpha\mu\|z^{k+0.5}-z^*\|^2 \\
         & \leq & -\frac{1}{2}\alpha\mu\|z^k-z^*\|^2+\alpha\mu\|z^k-z^{k+0.5}\|^2.
\end{eqnarray*}

To bound the term $- \alpha(z^{k+1}-z^{k+0.5})^{\top}F(z^{k+0.5})$, we use the optimality condition of the projection for the update $z^{k+0.5}$:
\begin{equation*}
    \langle z^{k+0.5}-z^k+\alpha F(z^k),z-z^{k+0.5}\rangle\geq 0,\quad\forall z\in\mathcal{Z}.
\end{equation*}
Taking $z=z^{k+1}$ in the above inequality and rearranging terms, we have
\begin{equation*}
    -\alpha F(z^k)^{\top}(z^{k+1}-z^{k+0.5})\leq \frac{1}{2}\|z^k-z^{k+1}\|^2-\frac{1}{2}\|z^{k+0.5}-z^k\|^2-\frac{1}{2}\|z^{k+1}-z^{k+0.5}\|^2.
\end{equation*}
Therefore,
\begin{eqnarray*}
& &        - \alpha(z^{k+1}-z^{k+0.5})^{\top}F(z^{k+0.5})  \\
&=&  - \alpha(z^{k+1}-z^{k+0.5})^{\top}F(z^{k+0.5}-F(z^k))- \alpha(z^{k+1}-z^{k+0.5})^{\top}F(z^{k})  \\
         & \leq & - \alpha(z^{k+1}-z^{k+0.5})^{\top}F(z^{k+0.5}-F(z^k))
          +\frac{1}{2}\|z^k-z^{k+1}\|^2-\frac{1}{2}\|z^{k+0.5}-z^k\|^2-\frac{1}{2}\|z^{k+1}-z^{k+0.5}\|^2 \\
         & \leq & \alpha L \|z^{k+1}-z^{k+0.5}\|\|z^{k+0.5}-z^k\|
          +\frac{1}{2}\|z^k-z^{k+1}\|^2-\frac{1}{2}\|z^{k+0.5}-z^k\|^2-\frac{1}{2}\|z^{k+1}-z^{k+0.5}\|^2 \\
         & \leq & \frac{1}{2}\alpha^2L^2\|z^{k+0.5}-z^k\|^2+\frac{1}{2}\|z^{k+1}-z^{k+0.5}\|^2
          +\frac{1}{2}\|z^k-z^{k+1}\|^2-\frac{1}{2}\|z^{k+0.5}-z^k\|^2-\frac{1}{2}\|z^{k+1}-z^{k+0.5}\|^2 \\
         & = & \frac{1}{2}\alpha^2L^2\|z^{k+0.5}-z^k\|^2+\frac{1}{2}\|z^k-z^{k+1}\|^2-\frac{1}{2}\|z^{k+0.5}-z^k\|^2.
\end{eqnarray*}

Combining the above bounds into \eqref{ex-grad-2-step-1} and multiplying both sides by 2 and rearranging, we obtain
\begin{eqnarray*}
         \|z^{k+1}-z^*\|^2 & \leq & (1-\alpha\mu)\|z^k-z^*\|^2+(\alpha^2L^2+2\alpha\mu-1)\|z^{k}-z^{k+0.5}\|^2  \\
         & \overset{\alpha:=\frac{1}{4L}}{\leq} & \left(1-\frac{\sigma}{4}\right)\|z^{k}-z^*\|^2,
\end{eqnarray*}
which is exactly the same reduction rate as for the regular extra-gradient method \eqref{ex-grad-1}.

\subsection{Other accelerated gradient methods}

There are a few other first-order algorithms that belong to the category of ``accelerated gradient methods'', broadly defined. Those methods were originally proposed not for solving VI problems. However, they can be easily adopted as such.

\subsubsection{The optimistic gradient descent ascent (OGDA) method}

Unlike extra-gradient method, which updates iterate $k$ with the mapping at $z^{k+0.5}$, the OGDA method updates with an extrapolated mapping direction:
\begin{equation}
    \label{update-ogda}
    z^{k+1} = P_{\mathcal{Z}}\left(z^k-\alpha F(z^k)-\tau\left(F(z^k)-F(z^{k-1})\right)\right).
\end{equation}
The following theorem shows the convergence result of the OGDA method, with the proof relegated to Appendix \ref{app:ogda}. A similar proof can be found in the appendix of \cite{palaniappan2016stochastic}.
\begin{theorem}
\label{th:ogda}
For solving a VI problem defined in \eqref{vi-prob}, with the mapping $F:\mathcal{Z}\rightarrow\mathbb{R}^n$ being Lipschitz continuous with constant $L$ and strongly monotone with constant $\mu$, the sequence $\{z^k\}$, $k=0,1,2,...$ generated from OGDA method \eqref{update-ogda} with parameters
    $\alpha = \frac{1}{2L}$ and $\tau = \frac{\alpha}{1+\sigma}$
yields an R-linear convergence as follows
\begin{equation*}
    \|z^k-z^*\|^2\leq 2(1+\sigma)^{-k}\|z^0-z^*\|^2.
\end{equation*}
\end{theorem}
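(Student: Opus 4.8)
The plan is to analyze the single projected update through a Lyapunov (potential) function, exploiting the fact that the ``optimism'' correction $\tau(F(z^k)-F(z^{k-1}))$ turns the search direction $G_k:=\alpha F(z^k)+\tau(F(z^k)-F(z^{k-1}))$ into a first-order predictor of $\alpha F(z^{k+1})$. First I would invoke the optimality condition of the projection defining $z^{k+1}=P_{\mathcal{Z}}(z^k-G_k)$, test it against $z=z^*$, and use the three-point identity $2\langle z^{k+1}-z^k,\,z^*-z^{k+1}\rangle=\|z^k-z^*\|^2-\|z^{k+1}-z^*\|^2-\|z^{k+1}-z^k\|^2$ to reach the base inequality
\[
\|z^{k+1}-z^*\|^2 \le \|z^k-z^*\|^2-\|z^{k+1}-z^k\|^2-2\langle G_k,\,z^{k+1}-z^*\rangle .
\]

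The key step is to write $G_k=\alpha F(z^{k+1})+E_k$ with prediction error $E_k=\tau(F(z^k)-F(z^{k-1}))-\alpha(F(z^{k+1})-F(z^k))$. For the main part, strong monotonicity \eqref{pre:strong-mono} together with the variational inequality $F(z^*)^\top(z^{k+1}-z^*)\ge0$ gives $\langle F(z^{k+1}),z^{k+1}-z^*\rangle\ge\mu\|z^{k+1}-z^*\|^2$; since $2\alpha\mu=\mu/L=\sigma$ for $\alpha=\tfrac1{2L}$, this moves a term to the left and turns the base inequality into a bound on $(1+\sigma)\|z^{k+1}-z^*\|^2$. The error contribution $-2\langle E_k,z^{k+1}-z^*\rangle$ then produces two families of terms. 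The \emph{forward} piece $2\alpha\langle F(z^{k+1})-F(z^k),z^{k+1}-z^*\rangle$, after the whole inequality is divided by $(1+\sigma)$, acquires coefficient $\tfrac{2\alpha}{1+\sigma}=2\tau$ and hence coincides exactly with the cross term I will place in the next-step potential; this is precisely why $\tau=\alpha/(1+\sigma)$ is the correct choice and is what drives the telescoping. The \emph{backward} piece $-2\tau\langle F(z^k)-F(z^{k-1}),z^{k+1}-z^*\rangle$ I would split as $-2\tau\langle F(z^k)-F(z^{k-1}),z^k-z^*\rangle$ (matching the current potential) plus a remainder, and all remainders, being of the form $\langle F\text{-difference},z\text{-difference}\rangle$, are bounded by Lipschitz continuity \eqref{pre:lip} and Young's inequality into multiples of $\|z^{k+1}-z^k\|^2$ and $\|z^k-z^{k-1}\|^2$.

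Collecting terms, I would define a potential of the form
\[
V_k=\|z^k-z^*\|^2+2\tau\langle F(z^{k-1})-F(z^k),\,z^k-z^*\rangle+c\,\|z^k-z^{k-1}\|^2
\]
for a suitable $c>0$, so that the reserved $-\|z^{k+1}-z^k\|^2$ and the $c\,\|z^k-z^{k-1}\|^2$ slack absorb the leftover cross terms and the one-step contraction $V_{k+1}\le\frac1{1+\sigma}V_k$ holds. Because the middle cross term is sign-indefinite, I would use \eqref{pre:lip} and Young's inequality once more to establish $V_k\ge\tfrac12\|z^k-z^*\|^2$, while the natural initialization $z^{-1}=z^0$ makes $V_0=\|z^0-z^*\|^2$. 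Chaining $\|z^k-z^*\|^2\le 2V_k\le 2(1+\sigma)^{-k}V_0$ then yields exactly the stated bound, with the factor $2$ originating from this lower bound on $V_k$.

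I expect the main obstacle to be the coefficient bookkeeping in the contraction step: one must pick $c$ and verify simultaneously that (i) the inner-product terms telescope with precisely the factor $\tfrac1{1+\sigma}$, (ii) every residual cross term is dominated by the reserved negative $\|z^{k+1}-z^k\|^2$ and the $c\,\|z^k-z^{k-1}\|^2$ slack, and (iii) the resulting $V_k$ still admits the lower bound $\tfrac12\|z^k-z^*\|^2$. These three requirements are tight enough that they essentially pin down the admissible step sizes, which is why the specific values $\alpha=\tfrac1{2L}$ and $\tau=\alpha/(1+\sigma)$ are what make the argument close.
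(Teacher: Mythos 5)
Your proposal is correct and follows essentially the same route as the paper's proof in Appendix~\ref{app:ogda}: the same base projection inequality via the three-point identity, the same add-and-subtract of $F(z^{k+1})$ with strong monotonicity applied at $z^{k+1}$ to produce the $(1+\sigma)$ factor, the same splitting of the optimism term across $z^{k+1}-z^k$ and $z^k-z^*$ with Lipschitz/Young bounds, and the same potential $V_k=\|z^k-z^*\|^2+2\tau\langle F(z^{k-1})-F(z^k),z^k-z^*\rangle+c\|z^k-z^{k-1}\|^2$ with contraction $(1+\sigma)V_{k+1}\le V_k$ and lower bound $V_k\ge\tfrac12\|z^k-z^*\|^2$. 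The only cosmetic difference is that you invoke the projection's optimality condition directly, whereas the paper uses its 1-co-coerciveness together with $z^*=P_{\mathcal{Z}}(z^*-\alpha F(z^*))$, which yields the identical base inequality.
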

Note that the term $F(z^k)-F(z^{k-1})$ is known as the \textit{optimism}.
The next two methods were initially introduced for optimization. To make a distinction in notation, below we use $x$ as the decision variable for optimization models.

\subsubsection{The heavy-ball method}

The heavy-ball method proposed in \cite{polyak1964some} was designed to solve strongly convex optimization
$\min\limits_{x} f(x)$ with the update rule:
\begin{equation}
    \label{heavy-ball-update}
    x^{k+1} = x^k - \frac{4}{(\sqrt{L}+\sqrt{\mu})^2}\nabla f(x^k)+\left(\frac{\sqrt{L}-\sqrt{\mu}}{\sqrt{L}+\sqrt{\mu}}\right)(x^k-x^{k-1}).
\end{equation}

The heavy-ball method is known to have an improved rate of convergence as compared to the regular gradient method (see~\cite{polyak1964some}), if $f$ is a strongly convex quadratic function. In particular, we have
\begin{equation}
    \label{heavy-ball-converge}
    \left\|\begin{pmatrix}
    x^{k+1}-x^*\\
    x^k-x^*
    \end{pmatrix}\right\|\leq C\cdot\left(\frac{1-\sqrt{\sigma}}{1+\sqrt{\sigma}}\right)^k\cdot \left\|\begin{pmatrix}
    x^{k}-x^*\\
    x^{k-1}-x^*
    \end{pmatrix}\right\|,
\end{equation}
where $C$ is a constant independent of $\sigma$.

\subsubsection{Nesterov's method}

Nesterov's accelerated gradient method for strongly convex optimization $\min\limits_{x} f(x)$ can be stated as follows~\cite{nesterov2003introductory}:
\begin{equation*}
    x^{k+1}=x^k+\beta(x^k-x^{k-1})-\alpha\nabla f(x^k+\beta(x^k-x^{k-1})),
\end{equation*}
where
    $\alpha = \frac{1}{L}$ and $\beta = \frac{\sqrt{L}-\sqrt{\mu}}{\sqrt{L}+\sqrt{\mu}}$.
Note that the above updating formula is for strongly convex minimization. (If the function is merely convex, we then let $\beta$ be depending on the iteration count $k$: $\beta_k=\frac{k}{k+3}$.)
Nesterov's method for strongly convex minimization has a R-linear convergence rate as follows:
\begin{equation*}
    f(x^k)-f(x^*)\leq 2(1-\sqrt{\sigma})^k\left(f(x^0)-f(x^*)\right).
\end{equation*}

In all the above methods, some {\it extra}\/ directions or points are introduced, in addition to the gradient direction. This motivates us to develop the proposed extra-point method.

\section{An Extra-Point Approach to Strongly Monotone VI}
\label{sec:vi}

Observe that in the previous subsections, the updating rules for the three methods take a similar form. First, we iteratively update an extrapolation point $x^{k+0.5}$ from the points $x^k$ and $x^{k-1}$, and the next iterative point is obtained from either $x^{k+0.5}$ or $x^{k}$ with a direction of either $\nabla f(x^k)$ or $\nabla f(x^{k+0.5})$. Specifically, assume $\mathcal{Z}=\mathbb{R}^n$ and assume $F(\cdot)$ is \textit{linear}, then the updates of OGDA can be written as:
\begin{equation*}
    \left\{
    \begin{array}{ccl}
         z^{k+0.5}&=&z^k+\beta(z^k-z^{k-1}), \\
         z^{k+1} &=& z^k-\alpha F(z^{k+0.5}).
    \end{array}
    \right.
\end{equation*}

The heavy-ball method is in the form:
\begin{equation*}
    \left\{
    \begin{array}{ccl}
         x^{k+0.5} &=& x^k+\beta(x^k-x^{k-1}), \\
         x^{k+1} &=& x^{k+0.5}-\alpha \nabla f(x^{k}).
    \end{array}
    \right.
\end{equation*}

Nesterov's accelerated method is in the form:
\begin{equation*}
    \left\{
    \begin{array}{ccl}
         x^{k+0.5} &=& x^k+\beta(x^k-x^{k-1}), \\
         x^{k+1} &=& x^{k+0.5}-\alpha \nabla f(x^{k+0.5}).
    \end{array}
    \right.
\end{equation*}

Obviously, the way how these terms/points are combined would matter for its practical performance, depending on the structure of the problem at hand. In this section,
we shall propose an inclusive general extra-point scheme to solve VI~\eqref{vi-prob}. Our aim is to provide a condition on the parameters ensuring the accelerated rate of convergence in the worst case.
We shall first consider the case where $F$ is defined on the whole space, and then extend the method to the case where the domain of $F$ is restricted to $\mathcal{Z}$.

\subsection{A general extra-point framework}
\label{sec:general-ex-point}

Our proposed extra-point scheme for solving the strongly monotone VI model~\eqref{vi-prob} is based on the following update formula:
\begin{equation}
    \label{update:extra-point}
    \left\{
    \begin{array}{ccl}
         z^{k+0.5} &=& z^k+\beta(z^k-z^{k-1})-\eta F(z^k), \\
         z^{k+1}   &=& P_{\mathcal{Z}}\left(z^k-\alpha F(z^{k+0.5})+\gamma(z^k-z^{k-1})-\tau\left(F(z^{k})-F(z^{k-1})\right)\right).
    \end{array}
    \right.
\end{equation}

The above procedure requires 5 nonnegative parameters to operate with: $\alpha,\beta,\gamma,\tau,\eta$; they represent respectively: extra-gradient, momentum, optimism, and gradient steps. These parameters need to be learned and fine-tuned to achieve good performances. Similar to the extra-gradient method \eqref{ex-grad-1}, we utilize the mapping $F(z^{k+0.5})$, where $z^{k+0.5}$ is defined differently as compared to \eqref{ex-grad-1}. Note that $z^{k+0.5}$ may be out of the constraint set $\mathcal{Z}$. However, through the projection operator for update $z^{k+1}$, the main sequence of iterative points $\{z^k\}$ for $k=0,1,2,...$ is entirely contained in $\mathcal{Z}$.

The following Table \ref{table:methods} displays how the five known algorithms manifest as special cases under this general scheme:

\begin{table}[h!]
\begin{tabular}{||l||c|c|c|c|c||l||}
\hline \hline
{\it  Existing Method  }      & $\alpha$ & $\beta$ & $\eta$ & $\gamma$ & $\tau$ & {\it The Dynamics } \\ \hline \hline
vanilla projection            & $+$      &  0      &   0    &    0     &   0    & $z^{k+1}=P_{\mathcal{Z}}\left(z^k-\alpha F(z^k)\right)$ \\ \hline
``heavy-ball''                & $+$      &  0      &   0    &   $+$    &   0    & $ z^{k+1}=P_{\mathcal{Z}}\left(z^k-\alpha F(z^k)+\gamma(z^k-z^{k-1})\right)$ \\ \hline
extra gradient                & $+$      &  0      &   $+$  &    0     &   0    & $z^{k+1}=P_{\mathcal{Z}}\left(z^k-\alpha F(z^k-\eta F(z^k))\right)$ \\ \hline
Nesterov's method             & $+$      &  $+$    &   0    &    $+$   &   0    & $z^{k+1}=P_{\mathcal{Z}}\left(z^k-\alpha F(z^k+\beta(z^k-z^{k-1}))+\gamma(z^k-z^{k-1})\right)$ \\ \hline
OGDA                          & $+$      &  0      &   0    &    0     &  $+$   & $z^{k+1}=P_{\mathcal{Z}}\left(z^k-\alpha F(z^{k})-\tau(F(z^{k})-F(z^{k-1}))\right)$ \\
\hline \hline
\end{tabular}
\caption{Parameters correspondence in different first-order methods}
\label{table:methods}
\end{table}

To analyze iteration complexity of the above scheme, let us first establish the following relation:

\begin{lemma}
\label{lem:extra-per-iter-conv}
For the sequences $\{z^k\}$ and $\{z^{k+0.5}\}$, $k=0,1,2,...$,  generated from the extra-point approach \eqref{update:extra-point}, the following inequality holds:
\begin{eqnarray}
        & & \|z^{k+1}-z^*\|^2 \nonumber \\
        & \leq & \left(1-\alpha\mu+3\gamma+\tau L(3+2\tau L+\frac{2\alpha}{\eta}+2\alpha L)+2\left(\gamma-\frac{\alpha\beta}{\eta}\right)^2+\left|-\frac{2\alpha\beta}{\eta}-\frac{2\alpha}{\eta}\left(\gamma-\frac{\alpha\beta}{\eta}\right)\right|\right)\|z^k-z^*\|^2 \nonumber \\
         &&+\left(2\left(\gamma-\frac{\alpha\beta}{\eta}\right)^2+\gamma+2\tau L(1+\tau L+\frac{\alpha}{\eta}+\alpha L)+\left|-\frac{2\alpha\beta}{\eta}-\frac{2\alpha}{\eta}\left(\gamma-\frac{\alpha\beta}{\eta}\right)\right|\right)\|z^{k-1}-z^*\|^2 \nonumber \\
         && + \left(\alpha^2L^2+\frac{\alpha^2}{\eta^2}+\frac{\alpha\tau L}{\eta}-\frac{2\alpha}{\eta}+2\alpha\mu+\alpha\tau L^2+\left|-\frac{\alpha\beta}{\eta}-\frac{\alpha}{\eta}\left(\gamma-\frac{\alpha\beta}{\eta}\right)\right|\right)\|z^k-z^{k+0.5}\|^2 \nonumber \\
         && + \left(-2\alpha+\frac{2\alpha^2}{\eta}\right)\left(F(z^{k+0.5})-F(z^k)\right)^{\top}(z^k-z^{k+0.5}) \nonumber \\
         && -2\alpha\left(\gamma-\frac{\alpha\beta}{\eta}\right)\left(F(z^{k+0.5})-F(z^k)\right)^{\top}(z^k-z^{k-1}) \nonumber \\
         && -2\tau\left(\gamma-\frac{\alpha\beta}{\eta}\right)\left(F(z^k)-F(z^{k-1})\right)^{\top}(z^k-z^{k-1}) . \label{relation-extra-points}
\end{eqnarray}

\end{lemma}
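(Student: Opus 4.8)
The plan is to bound $\|z^{k+1}-z^*\|^2$ by first peeling off the projection and then expanding the resulting squared norm term by term. Since $z^*\in\mathcal Z$ is a fixed point of $P_{\mathcal Z}$, non-expansiveness \eqref{pre:non-exp} gives $\|z^{k+1}-z^*\|^2\le\|w^{k+1}-z^*\|^2$, where $w^{k+1}=z^k-\alpha F(z^{k+0.5})+\gamma(z^k-z^{k-1})-\tau(F(z^k)-F(z^{k-1}))$ is the pre-projection point. Writing $w^{k+1}-z^*$ as the sum of the four pieces $(z^k-z^*)$, $-\alpha F(z^{k+0.5})$, $\gamma(z^k-z^{k-1})$ and $-\tau(F(z^k)-F(z^{k-1}))$, I would expand the square into four squared norms and six inner products, and then process each one.

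Two structural identities drive the whole computation. First, rearranging the update for $z^{k+0.5}$ yields $F(z^k)=\frac1\eta(z^k-z^{k+0.5})+\frac\beta\eta(z^k-z^{k-1})$; substituting this (together with the splitting $F(z^{k+0.5})=(F(z^{k+0.5})-F(z^k))+F(z^k)$) wherever a gradient is paired with a point difference converts all gradient contributions into the ``atoms'' $z^k-z^{k+0.5}$, $z^k-z^{k-1}$ and $F(z^{k+0.5})-F(z^k)$. This is exactly what produces the recurring combination $\gamma-\frac{\alpha\beta}{\eta}$ (the explicit $\gamma$ merging with the $-\frac{\alpha\beta}{\eta}$ coming out of $F(z^k)$) together with the $\frac1\eta,\frac1{\eta^2}$ factors. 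Second, for the dominant inner product $-2\alpha(z^k-z^*)^\top F(z^{k+0.5})$ I would split $z^k-z^*=(z^{k+0.5}-z^*)+(z^k-z^{k+0.5})$; applying strong monotonicity \eqref{pre:strong-mono} at $z^{k+0.5}$ gives $-2\alpha(z^{k+0.5}-z^*)^\top F(z^{k+0.5})\le-2\alpha\mu\|z^{k+0.5}-z^*\|^2\le-\alpha\mu\|z^k-z^*\|^2+2\alpha\mu\|z^k-z^{k+0.5}\|^2$, which is the source of the $-\alpha\mu$ and $+2\alpha\mu$ terms.

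The remaining bookkeeping is routine but lengthy. I would deliberately retain the three inner products $(F(z^{k+0.5})-F(z^k))^\top(z^k-z^{k+0.5})$, $(F(z^{k+0.5})-F(z^k))^\top(z^k-z^{k-1})$ and $(F(z^k)-F(z^{k-1}))^\top(z^k-z^{k-1})$ unbounded — these will be controlled by monotonicity and Lipschitz continuity only later in the complexity proof — and collect their coefficients ($-2\alpha+\frac{2\alpha^2}{\eta}$, $-2\alpha(\gamma-\frac{\alpha\beta}{\eta})$ and $-2\tau(\gamma-\frac{\alpha\beta}{\eta})$, respectively) from all the sources that feed them. Every other contribution I would bound using Lipschitz continuity \eqref{pre:lip} in the forms $\|F(z^{k+0.5})-F(z^k)\|\le L\|z^{k+0.5}-z^k\|$ and $\|F(z^k)-F(z^{k-1})\|\le L\|z^k-z^{k-1}\|$, Young's inequality to split mixed products, and the elementary bound $\|z^k-z^{k-1}\|^2\le2\|z^k-z^*\|^2+2\|z^{k-1}-z^*\|^2$ to redistribute every $\|z^k-z^{k-1}\|^2$ into the two distance terms. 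The coefficient of $\|z^k-z^*\|^2$ then gathers $-\alpha\mu$, the pure $3\gamma$ (from the exact expansion $2\gamma(z^k-z^*)^\top(z^k-z^{k-1})=2\gamma\|z^k-z^*\|^2-2\gamma(z^k-z^*)^\top(z^{k-1}-z^*)$), the $2(\gamma-\frac{\alpha\beta}{\eta})^2$ and the $\tau L$, $\tau^2L^2$ terms, while the sign-indefinite $\langle z^k-z^{k+0.5},z^k-z^{k-1}\rangle$ products are absorbed into the absolute-value term $\big|-\frac{2\alpha\beta}{\eta}-\frac{2\alpha}{\eta}(\gamma-\frac{\alpha\beta}{\eta})\big|$, with analogous collections for $\|z^{k-1}-z^*\|^2$ and $\|z^k-z^{k+0.5}\|^2$.

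I expect the main obstacle to be precisely this coefficient-matching: several distinct cross terms contribute to each of $\|z^k-z^{k+0.5}\|^2$, $\|z^k-z^*\|^2$ and $\|z^{k-1}-z^*\|^2$, so one must be careful to (i) substitute $F(z^k)$ only where it pairs with $z^k-z^{k+0.5}$ or $z^k-z^{k-1}$, and not with $z^k-z^*$ (which is what keeps the $\gamma$ cross-term pure rather than $\gamma-\frac{\alpha\beta}{\eta}$); (ii) sum all the $\langle z^k-z^{k+0.5},z^k-z^{k-1}\rangle$ coefficients before taking the absolute value, as they combine to exactly $-\frac{2\alpha\beta}{\eta}-\frac{2\alpha}{\eta}(\gamma-\frac{\alpha\beta}{\eta})$; and (iii) choose the Young weights so that the produced coefficients match the stated ones exactly. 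A secondary subtlety is the strong-monotonicity step at $z^{k+0.5}$, which need not lie in $\mathcal Z$; in the whole-space setting treated here the solution satisfies $F(z^*)=0$, so the term $-2\alpha(z^{k+0.5}-z^*)^\top F(z^*)$ vanishes and the argument goes through.
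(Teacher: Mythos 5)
Your plan follows the paper's own proof essentially step for step: non-expansiveness to remove the projection, substitution of $F(z^k)=\frac{1}{\eta}\left(z^k-z^{k+0.5}+\beta(z^k-z^{k-1})\right)$ together with the splitting $F(z^{k+0.5})=(F(z^{k+0.5})-F(z^k))+F(z^k)$, strong monotonicity applied at $z^{k+0.5}$ after writing $z^k-z^*=(z^{k+0.5}-z^*)+(z^k-z^{k+0.5})$, Lipschitz/Young bounds on the remaining cross terms, absorption of the $\langle z^k-z^{k+0.5},z^k-z^{k-1}\rangle$ term into the absolute-value coefficient, and the final redistribution $\|z^k-z^{k-1}\|^2\le 2\|z^k-z^*\|^2+2\|z^{k-1}-z^*\|^2$ --- and your points (i)--(iii) correctly identify exactly the bookkeeping choices the paper makes. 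The only caveat is that the argument is laid out as a plan rather than executed line by line, but every structural ingredient, including the treatment of $F(z^*)$ in the strong-monotonicity step (which you handle at least as carefully as the paper does), is in place and correct.
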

\begin{proof}
    See Appendix \ref{app:extra-per-iter-conv}.
\end{proof}

Lemma \ref{lem:extra-per-iter-conv} will provide a basis for our analysis, which relates the points $z^{k-1},z^k,z^{k+0.5},z^{k+1}$ generated by the extra-point scheme \eqref{update:extra-point}.
Towards a linear convergence rate, we require the parameters $\alpha,\beta,\gamma,\eta,\tau$ to fit the following patterns:
\begin{enumerate}
    \item There should only be $\|z^{k+1}-z^*\|^2$, $\|z^k-z^*\|^2$, $\|z^{k-1}-z^*\|^2$ in the relational bound~\eqref{relation-extra-points}. Therefore, the coefficient of $\|z^{k}-z^{k+0.5}\|^2$ should be non-positive; the coefficient of $(F(z^{k+0.5})-F(z^k))^{\top}(z^k-z^{k-1})$ should be 0; the coefficient of $(F(z^{k+0.5})-F(z^k))^{\top}(z^k-z^{k+0.5})$ should be non-negative; the coefficient of $\left(F(z^k)-F(z^{k-1})\right)^{\top}(z^k-z^{k-1})$ should be non-positive. Note that these terms themselves either non-positive or non-negative accordingly due to the monotonicity of $F$.
    \item A valid reduction rate should be established for the term $\|z^k-z^*\|^2$; that is:
    \begin{equation*}
        \alpha\mu-3\gamma-\tau L(3+2\tau L+\frac{2\alpha}{\eta}+2\alpha L)-2\left(\gamma-\frac{\alpha\beta}{\eta}\right)^2-\left|-\frac{2\alpha\beta}{\eta}-\frac{2\alpha}{\eta}\left(\gamma-\frac{\alpha\beta}{\eta}\right)\right|\in (0,1).
    \end{equation*}
    \item The coefficient of $\|z^{k-1}-z^*\|^2$ should not be too large. In particular, it should be smaller than the reduction rate for $\|z^k-z^*\|^2$:
    \begin{eqnarray*}
            0 &\leq & 2\left(\gamma-\frac{\alpha\beta}{\eta}\right)^2+\gamma+2\tau L(1+\tau L+\frac{\alpha}{\eta}+\alpha L)+\left|-\frac{2\alpha\beta}{\eta}-\frac{2\alpha}{\eta}\left(\gamma-\frac{\alpha\beta}{\eta}\right)\right|  \\
             & < & \alpha\mu-3\gamma-\tau L(3+2\tau L+\frac{2\alpha}{\eta}+2\alpha L)-2\left(\gamma-\frac{\alpha\beta}{\eta}\right)^2-\left|-\frac{2\alpha\beta}{\eta}-\frac{2\alpha}{\eta}\left(\gamma-\frac{\alpha\beta}{\eta}\right)\right|.
    \end{eqnarray*}
\end{enumerate}

Following the above three guidelines, \eqref{relation-extra-points} in Lemma \ref{lem:extra-per-iter-conv} leads to an inequality in the form of
\begin{equation}
    \label{lin-ab}
    \|z^{k+1}-z^*\|^2\leq (1-a)\|z^k-z^*\|^2+b\|z^{k-1}-z^*\|^2,
\end{equation}
where $0\leq b<a<1$ are constants to be determined.
The above bound can then be further transformed as follows:
\begin{lemma}
\label{lem:theta-range}
Suppose that \eqref{lin-ab} holds. Then, for any $\theta$ satisfying
\begin{equation*}
    b<\frac{\sqrt{(1-a)^2+4b}-(1-a)}{2}\leq \theta <a,
\end{equation*}
it holds that
\begin{equation*}
    \|z^{k+1}-z^*\|^2+\theta\|z^k-z^*\|^2\leq (1-(a-\theta))\|z^k-z^*\|^2+\theta(1-(a-\theta))\|z^{k-1}-z^*\|^2.
\end{equation*}
\end{lemma}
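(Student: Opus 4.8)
The plan is to reduce the claimed two-term estimate to a single scalar inequality in $\theta$. I would write $d_j := \|z^j-z^*\|^2 \ge 0$ for brevity, so that the hypothesis \eqref{lin-ab} reads $d_{k+1} \le (1-a)\,d_k + b\,d_{k-1}$. Adding $\theta\,d_k$ to both sides then immediately yields
\[
d_{k+1} + \theta\,d_k \;\le\; (1-a+\theta)\,d_k + b\,d_{k-1} \;=\; \left(1-(a-\theta)\right)d_k + b\,d_{k-1}.
\]
Comparing this with the target inequality, the coefficient of $d_k$ already matches the desired $\left(1-(a-\theta)\right)$, so the only remaining task is to replace the term $b\,d_{k-1}$ by $\theta\left(1-(a-\theta)\right)d_{k-1}$, i.e.\ to establish $b\,d_{k-1} \le \theta\left(1-(a-\theta)\right)d_{k-1}$.

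Since $d_{k-1}\ge 0$, this holds as soon as the purely scalar inequality $b \le \theta\left(1-(a-\theta)\right) = \theta^2 + (1-a)\theta$ is valid, and this is the step I would verify next. Rearranging, it is exactly the quadratic condition $q(\theta) := \theta^2 + (1-a)\theta - b \ge 0$. The quadratic $q$ has roots $\tfrac{1}{2}\left(-(1-a)\pm\sqrt{(1-a)^2+4b}\right)$, of which the larger, $\theta_+ = \tfrac{1}{2}\left(\sqrt{(1-a)^2+4b}-(1-a)\right)$, is the only nonnegative one; as the leading coefficient of $q$ is positive, $q(\theta)\ge 0$ holds for every $\theta\ge \theta_+$. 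The hypothesis on $\theta$ supplies precisely the lower bound $\theta \ge \theta_+$, so $q(\theta)\ge 0$, which gives the term-wise comparison and hence the lemma.

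I do not anticipate a serious obstacle: the argument is a one-line addition of $\theta\,d_k$ followed by solving a scalar quadratic, and the only structural fact it relies on is the nonnegativity $d_{k-1}\ge 0$ of the squared distances, which licenses the term-by-term comparison. The point most worth double-checking is simply that $\theta_+$ is the correct (nonnegative) root, i.e.\ that the lower threshold displayed in the statement coincides with the boundary of the region on which $q$ is nonnegative. I would also remark that the remaining hypotheses are not needed for the stated inequality itself but serve the subsequent analysis: the bound $\theta<a$ ensures the contraction factor $1-(a-\theta)$ lies in $(0,1)$, while $b<\theta_+$ (together with $b<a$) guarantees the admissible interval for $\theta$ is nonempty, so that this estimate can later be telescoped into a genuine linear-convergence bound for the Lyapunov quantity $\|z^{k+1}-z^*\|^2+\theta\|z^k-z^*\|^2$.
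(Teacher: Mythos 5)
Your proof is correct and follows exactly the route the paper takes: the paper's own justification is the single observation that $b\leq\theta\left(1-(a-\theta)\right)$, and your quadratic analysis of $\theta^2+(1-a)\theta-b\geq 0$ simply fills in why that inequality holds for $\theta$ at or above the stated threshold. Your closing remarks on the roles of $\theta<a$ and the nonemptiness of the admissible interval are accurate and consistent with how the lemma is used afterward.
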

\begin{proof}
    The proof follows immediately from the fact that $b\leq\theta(1-(a-\theta))$.
\end{proof}

A specific choice of $\theta$ can be $\frac{a+b}{2}$. This leads to our main convergence result, summarized in the next theorem.

\begin{theorem}
\label{th:ex-point-result}
For solving VI model as in \eqref{vi-prob}, with $F:\mathbb{R}^n\rightarrow\mathbb{R}^n$ being Lipschitz continuous with constant $L$ and strongly monotone with constant $\mu$, the sequence $\{z^k\}$, $k=0,1,2,...$ generated from the extra-point approach \eqref{update:extra-point} satisfies %yields an R-linear convergence as the following:
\begin{equation*}
    \|z^{k+1}-z^*\|^2+\theta\|z^k-z^*\|^2\leq \left(1-(a-\theta)\right) \cdot \left(\|z^k-z^*\|^2+\theta \|z^{k-1}-z^*\|^2\right),
\end{equation*}
where
\begin{equation*}
    \begin{array}{ll}
         & a= \alpha\mu-3\gamma-\tau L(3+2\tau L+\frac{2\alpha}{\eta}+2\alpha L)-2\left(\gamma-\frac{\alpha\beta}{\eta}\right)^2-\left|-\frac{2\alpha\beta}{\eta}-\frac{2\alpha}{\eta}\left(\gamma-\frac{\alpha\beta}{\eta}\right)\right|,\\
         & b<\frac{\sqrt{(1-a)^2+4b}-(1-a)}{2}\leq \theta <a,  \\
         & b = 2\left(\gamma-\frac{\alpha\beta}{\eta}\right)^2+\gamma+2\tau L(1+\tau L+\frac{\alpha}{\eta}+\alpha L)+\left|-\frac{2\alpha\beta}{\eta}-\frac{2\alpha}{\eta}\left(\gamma-\frac{\alpha\beta}{\eta}\right)\right|,
    \end{array}
\end{equation*}
provided that the parameters $\alpha,\beta,\gamma,\eta,\tau$
satisfy the following constraints:
\begin{equation}
    \label{extra-point-const}
    \left\{
    \begin{array}{ll}
        \alpha\mu-4\gamma-\tau L(5+4\tau L+\frac{4\alpha}{\eta}+4\alpha L)-4\left(\gamma-\frac{\alpha\beta}{\eta}\right)^2-4\left|-\frac{\alpha\beta}{\eta}-\frac{\alpha\gamma}{\eta}+\frac{\alpha^2\beta}{\eta^2}\right|> 0,\\
         1> \alpha\mu-3\gamma-\tau L(3+2\tau L+\frac{2\alpha}{\eta}+2\alpha L)-2\left(\gamma-\frac{\alpha\beta}{\eta}\right)^2-\left|-\frac{2\alpha\beta}{\eta}-\frac{2\alpha}{\eta}\left(\gamma-\frac{\alpha\beta}{\eta}\right)\right|, \\
         \alpha^2L^2+\frac{\alpha^2}{\eta^2}+\frac{\alpha\tau L}{\eta}-\frac{2\alpha}{\eta}+2\alpha\mu+\alpha\tau L^2+\left|-\frac{\alpha\beta}{\eta}-\frac{\alpha}{\eta}\left(\gamma-\frac{\alpha\beta}{\eta}\right)\right|\leq 0, \\
          -2\alpha+\frac{2\alpha^2}{\eta} \geq 0, \\
         2\tau\left(\gamma-\frac{\alpha\beta}{\eta}\right)\geq 0, \\
          (\gamma\eta-\alpha\beta)\alpha=0, \\
          \alpha,\beta,\gamma,\tau\geq 0,\quad \eta >0.
    \end{array}
    \right.
\end{equation}
\end{theorem}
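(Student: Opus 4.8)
The plan is to start from the per-iteration estimate \eqref{relation-extra-points} supplied by Lemma \ref{lem:extra-per-iter-conv} and to argue that, under the constraints \eqref{extra-point-const}, every term on its right-hand side except the ones carrying $\|z^k-z^*\|^2$ and $\|z^{k-1}-z^*\|^2$ can be discarded. What survives is exactly the two-step recursion \eqref{lin-ab}, whose coefficients turn out to be $1-a$ and $b$ as defined in the statement; Lemma \ref{lem:theta-range} then converts \eqref{lin-ab} into the asserted contraction in one line. So the whole argument is a matching exercise: each constraint in \eqref{extra-point-const} is responsible for killing or bounding exactly one problematic term.

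First I would dispose of the four ``extra'' terms of \eqref{relation-extra-points}, each tied to one constraint. The quantity $\|z^k-z^{k+0.5}\|^2$ is nonnegative, so the third constraint (its coefficient being $\le 0$) lets me drop it. For the inner product $(F(z^{k+0.5})-F(z^k))^{\top}(z^k-z^{k+0.5})$, strong monotonicity \eqref{pre:strong-mono} gives $(F(z^{k+0.5})-F(z^k))^{\top}(z^{k+0.5}-z^k)\ge\mu\|z^{k+0.5}-z^k\|^2\ge 0$, hence this term is $\le 0$; the fourth constraint $-2\alpha+\frac{2\alpha^2}{\eta}\ge 0$ makes its coefficient nonnegative, so the product is $\le 0$ and is discarded. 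Likewise $(F(z^k)-F(z^{k-1}))^{\top}(z^k-z^{k-1})\ge 0$, and the fifth constraint $2\tau(\gamma-\frac{\alpha\beta}{\eta})\ge 0$ makes the coefficient $-2\tau(\gamma-\frac{\alpha\beta}{\eta})\le 0$, again yielding a nonpositive contribution. The cross term $(F(z^{k+0.5})-F(z^k))^{\top}(z^k-z^{k-1})$ has no definite sign and therefore cannot be handled by monotonicity; instead the sixth constraint $(\gamma\eta-\alpha\beta)\alpha=0$ forces its coefficient $-2\alpha(\gamma-\frac{\alpha\beta}{\eta})=-\frac{2\alpha(\gamma\eta-\alpha\beta)}{\eta}$ to vanish outright. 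This is the sole place where an equality, rather than an inequality, constraint is essential.

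What remains is precisely \eqref{lin-ab}, with $1-a$ the coefficient of $\|z^k-z^*\|^2$ and $b$ the coefficient of $\|z^{k-1}-z^*\|^2$, in agreement with the definitions of $a$ and $b$ in the statement. It then remains to certify $0\le b<a<1$. Nonnegativity $b\ge 0$ is immediate, since $b$ is a sum of a square, an absolute value, and products of nonnegative parameters (last constraint). The bound $a<1$ is literally the second constraint. For $b<a$ I would carry out the short simplification showing that $a-b$ equals the left-hand side of the first constraint: collecting the $\gamma$-, $\tau L$-, square-, and absolute-value-contributions gives $a-b=\alpha\mu-4\gamma-\tau L(5+4\tau L+\frac{4\alpha}{\eta}+4\alpha L)-4(\gamma-\frac{\alpha\beta}{\eta})^2-4\left|-\frac{\alpha\beta}{\eta}-\frac{\alpha\gamma}{\eta}+\frac{\alpha^2\beta}{\eta^2}\right|$, where one uses $\left|-\frac{2\alpha\beta}{\eta}-\frac{2\alpha}{\eta}(\gamma-\frac{\alpha\beta}{\eta})\right|=2\left|-\frac{\alpha\beta}{\eta}-\frac{\alpha\gamma}{\eta}+\frac{\alpha^2\beta}{\eta^2}\right|$. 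Thus the first constraint is nothing but $a-b>0$.

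With $0\le b<a<1$ in hand, \eqref{lin-ab} satisfies the hypothesis of Lemma \ref{lem:theta-range}; picking any admissible $\theta$ (for example $\theta=\frac{a+b}{2}$, which lies in $\left[\frac{\sqrt{(1-a)^2+4b}-(1-a)}{2},\,a\right)$ exactly because $b<a<1$) and rewriting the right-hand side as $(1-(a-\theta))(\|z^k-z^*\|^2+\theta\|z^{k-1}-z^*\|^2)$ yields the statement verbatim. The only genuinely delicate parts are the bookkeeping identity $a-b=\text{(first constraint)}$ and the correct sign accounting in the three monotonicity arguments; once the heavy estimate of Lemma \ref{lem:extra-per-iter-conv} is granted, everything else is routine.
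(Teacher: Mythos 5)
Your proposal is correct and follows essentially the same route as the paper: the paper's proof of this theorem is exactly the combination of Lemma \ref{lem:extra-per-iter-conv} and Lemma \ref{lem:theta-range}, with the three ``guidelines'' preceding the theorem playing the role of your term-by-term sign accounting. Your write-up in fact supplies more detail than the paper does, in particular the explicit verification that the first constraint is the identity $a-b>0$.
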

\begin{proof}
    The proof follows directly from Lemma \ref{lem:extra-per-iter-conv} and Lemma \ref{lem:theta-range}.
\end{proof}

Remark that \eqref{extra-point-const} can be easily satisfied. For example we may let
\begin{equation}
    \label{ex-point-par-example}
    (\alpha,\beta,\gamma,\eta,\tau)=\left(\frac{1}{4L},\frac{\sigma}{64},\frac{\sigma}{64},\frac{1}{4L},\frac{\sigma}{128L}\right),
\end{equation}
where $\sigma=\frac{\mu}{L}$, and we have
\begin{equation*}
    \frac{33\sigma}{256}>a>\frac{32\sigma}{256},\quad b<\frac{22\sigma}{256}.
\end{equation*}
Note that the convergence rate is R-linear with rate $1-(a-\theta)$. A simple estimation could be made by fixing $\theta=\frac{a+b}{2}$. With the choice in \eqref{ex-point-par-example}, the reduction rate is guaranteed to be at least $1-\frac{1}{2}\cdot\left(\frac{32\sigma}{256}-\frac{22\sigma}{256}\right)=1-\frac{5\sigma}{256}$. The resulting iteration complexity is therefore at the {\it optimal}\/ order of
 $   \mathcal{O}\left(\kappa %\frac{1}{\sigma}
 \ln\left(\frac{1}{\epsilon}\right)\right)$,
for finding an $\epsilon$-solution.

\subsection{The extra-point approach with domain restriction}
\label{sec:res-dom-ex-point}
In this subsection we shall present a variant of the extra-point approach \eqref{update:extra-point} shown in the previous subsection, in that the domain of $F(\cdot)$
is assumed to be $\mathcal{Z}$. In that case, the extrapolation point $z^{k+0.5}$ needs to be projected back to $\mathcal{Z}$, leading to the following updating formula:
\begin{equation}
    \label{update:extra-point-2}
    \left\{
    \begin{array}{ccl}
         z^{k+0.5} &=& P_{\mathcal{Z}}\left(z^k+\beta(z^k-z^{k-1})-\eta F(z^k)\right), \\
         z^{k+1} &=& P_{\mathcal{Z}}\left(z^k-\alpha F(z^{k+0.5})+\gamma(z^k-z^{k-1})-\tau\left(F(z^{k})-F(z^{k-1})\right)\right).
    \end{array}
    \right.
\end{equation}

Recall that in Section \ref{sec:proj-and-extra-grad} we also treated separately these two different settings for the extra-gradient method. There, the analysis for the case of restricted domain is different but the rate of convergence remains the same. For the extra-point approach, the modifications are subtler:
both the relations for the parameters and the reduction rate will need to be different, though the order of iteration complexity remains
$\mathcal{O}\left(\kappa
\ln\left(\frac{1}{\epsilon}\right)\right)$.

As in the previous section, we shall introduce below a relationship among the extra iterative points,
and the proof of the lemma will be relegated to the appendix.

\begin{lemma}
\label{lem:res-extra-per-iter-conv}
For the sequences $\{z^k\}$ and $\{z^{k+0.5}\}$, $k=0,1,2,...$  generated from the extra-point scheme \eqref{update:extra-point-2}, the following inequality holds
\begin{eqnarray}
    \label{exp-1-ineq-result}
         (1-\tau L)\|z^{k+1}-z^*\|^2& \leq & (1-\alpha\mu+4\gamma+2|\gamma-\beta|+2\tau L)\|z^k-z^*\|^2  \nonumber \\
         && + (2\gamma+2|\gamma-\beta|+2\tau L)\|z^{k-1}-z^*\|^2 \nonumber \\
         & & + (\alpha L +|\gamma-\beta|-1)\|z^{k+1}-z^{k+0.5}\|^2 \nonumber \\
         && +(\alpha L+2\alpha\mu+2\gamma-1)\|z^{k+0.5}-z^k\|^2  \nonumber \\
         && + 2(\eta-\alpha)F(z^k)^{\top}(z^{k+1}-z^{k+0.5}).
\end{eqnarray}
\end{lemma}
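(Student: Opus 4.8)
The plan is to mirror the restricted-domain extra-gradient analysis carried out for~\eqref{ex-grad-2} in Section~\ref{sec:proj-and-extra-grad}, now carrying along the three additional directions (momentum $\gamma$, optimism $\tau$, and the extrapolation $\beta$ inside the half-step). Since $z^{k+1}$ is a projection and $z^*=P_{\mathcal{Z}}(z^*)$, I would open with the $1$-co-coerciveness~\eqref{1-co-coer} applied to the outer update of~\eqref{update:extra-point-2}, giving
\begin{equation*}
\|z^{k+1}-z^*\|^2\leq (z^{k+1}-z^*)^{\top}\left(z^k-z^*-\alpha F(z^{k+0.5})+\gamma(z^k-z^{k-1})-\tau\left(F(z^k)-F(z^{k-1})\right)\right),
\end{equation*}
and then expand the right-hand side term by term, applying the polarization identity $(z^{k+1}-z^*)^{\top}(z^k-z^*)=\frac12\|z^{k+1}-z^*\|^2+\frac12\|z^k-z^*\|^2-\frac12\|z^{k+1}-z^k\|^2$ to the leading quadratic part.

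The $-\alpha F(z^{k+0.5})$ piece is the core. I split $z^{k+1}-z^*=(z^{k+1}-z^{k+0.5})+(z^{k+0.5}-z^*)$. On $-\alpha(z^{k+0.5}-z^*)^{\top}F(z^{k+0.5})$ I use strong monotonicity~\eqref{pre:strong-mono} together with the VI inequality~\eqref{vi-prob}: crucially, the restricted-domain scheme~\eqref{update:extra-point-2} projects $z^{k+0.5}$ back into $\mathcal{Z}$, so $F(z^*)^{\top}(z^{k+0.5}-z^*)\geq0$ and this piece is bounded by $-\alpha\mu\|z^{k+0.5}-z^*\|^2$, which I relay to $\|z^k-z^*\|^2$ and $\|z^{k+0.5}-z^k\|^2$ through $\|z^k-z^*\|^2\leq 2\|z^k-z^{k+0.5}\|^2+2\|z^{k+0.5}-z^*\|^2$ (this yields the $-\alpha\mu$ and $2\alpha\mu$ contributions after the final doubling). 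For the companion piece I write $F(z^{k+0.5})=\left(F(z^{k+0.5})-F(z^k)\right)+F(z^k)$; the difference part is handled by Lipschitz continuity~\eqref{pre:lip} and Young's inequality (producing the $\alpha L$ contributions to both $\|z^{k+1}-z^{k+0.5}\|^2$ and $\|z^{k+0.5}-z^k\|^2$), while $-\alpha(z^{k+1}-z^{k+0.5})^{\top}F(z^k)$ is split as $-\eta(\cdots)+(\eta-\alpha)(\cdots)$. The $-\eta F(z^k)^{\top}(z^{k+1}-z^{k+0.5})$ part is bounded using the optimality condition of the inner projection defining $z^{k+0.5}$ (setting $z=z^{k+1}$), and the residual $(\eta-\alpha)F(z^k)^{\top}(z^{k+1}-z^{k+0.5})$ is exactly the surviving cross term in~\eqref{exp-1-ineq-result}. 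A key bookkeeping observation is that this optimality step contributes $(z^{k+0.5}-z^k)^{\top}(z^{k+1}-z^{k+0.5})=\frac12\|z^{k+1}-z^k\|^2-\frac12\|z^{k+0.5}-z^k\|^2-\frac12\|z^{k+1}-z^{k+0.5}\|^2$, whose $+\frac12\|z^{k+1}-z^k\|^2$ cancels the $-\frac12\|z^{k+1}-z^k\|^2$ from polarization (explaining the absence of $\|z^{k+1}-z^k\|^2$ in the statement) and whose negative terms supply the $-1$ in the two difference coefficients.

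The remaining two directions produce the $|\gamma-\beta|$ and $\tau L$ terms. The same optimality condition also emits $-\beta(z^k-z^{k-1})^{\top}(z^{k+1}-z^{k+0.5})$, which I combine with the $\gamma(z^{k+1}-z^{k+0.5})^{\top}(z^k-z^{k-1})$ portion of the momentum term (after splitting $z^{k+1}-z^*=(z^{k+1}-z^{k+0.5})+(z^{k+0.5}-z^*)$) into $(\gamma-\beta)(z^{k+1}-z^{k+0.5})^{\top}(z^k-z^{k-1})$; since the sign of $\gamma-\beta$ is undetermined, Cauchy--Schwarz and Young give $|\gamma-\beta|$ on $\|z^{k+1}-z^{k+0.5}\|^2$ and, via $\|z^k-z^{k-1}\|^2\leq 2\|z^k-z^*\|^2+2\|z^{k-1}-z^*\|^2$, the $2|\gamma-\beta|$ coefficients. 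The residual $\gamma(z^{k+0.5}-z^*)^{\top}(z^k-z^{k-1})$ is handled by fully expanding both factors as $z^{k+0.5}-z^*=(z^{k+0.5}-z^k)+(z^k-z^*)$ and $z^k-z^{k-1}=(z^k-z^*)-(z^{k-1}-z^*)$ and applying Young to the three genuine cross terms; this routes a clean $2\gamma\|z^k-z^*\|^2+\gamma\|z^{k-1}-z^*\|^2+\gamma\|z^{k+0.5}-z^k\|^2$ (before doubling) into the pure-$\gamma$ coefficients. Finally, for the optimism term I bound $-\tau(z^{k+1}-z^*)^{\top}\left(F(z^k)-F(z^{k-1})\right)\leq \frac{\tau L}{2}\|z^{k+1}-z^*\|^2+\frac{\tau L}{2}\|z^k-z^{k-1}\|^2$ by Lipschitz continuity and Young; after multiplying the whole inequality by $2$, the first summand becomes $\tau L\|z^{k+1}-z^*\|^2$ and is moved to the left to form $(1-\tau L)\|z^{k+1}-z^*\|^2$, while the second supplies the $2\tau L$ coefficients. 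Collecting all contributions and doubling yields~\eqref{exp-1-ineq-result}.

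I expect the main obstacle to be bookkeeping rather than any single estimate: one must route each difference-of-iterates quantity ($\|z^{k+1}-z^{k+0.5}\|^2$, $\|z^{k+0.5}-z^k\|^2$, $\|z^k-z^{k-1}\|^2$) into the correct star-distances with exactly the right constants, verify the cancellation of $\|z^{k+1}-z^k\|^2$, and treat the undetermined sign of $\gamma-\beta$ uniformly through the absolute value. The delicate point is that one single application of the inner-projection optimality condition must simultaneously deliver the $\beta$ direction (to pair with $\gamma$ into $|\gamma-\beta|$), the identity that cancels $\|z^{k+1}-z^k\|^2$, and the splitting that isolates the $2(\eta-\alpha)F(z^k)^{\top}(z^{k+1}-z^{k+0.5})$ leftover; extracting all three consistently is where the care lies.
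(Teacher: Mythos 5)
Your proposal is correct and follows essentially the same route as the paper's own proof: the $1$-co-coerciveness opening, the split of $z^{k+1}-z^*$ into $(z^{k+1}-z^{k+0.5})+(z^{k+0.5}-z^*)$, the inner projection's optimality condition at $z=z^{k+1}$ (which both supplies the $\beta$ direction to pair with $\gamma$ and cancels $\|z^{k+1}-z^k\|^2$), the isolation of the $2(\eta-\alpha)F(z^k)^{\top}(z^{k+1}-z^{k+0.5})$ leftover, and the Young/triangle relays into the star-distances all match, with the same constants. The only differences are cosmetic bookkeeping choices (e.g., expanding both factors of the $\gamma(z^{k+0.5}-z^*)^{\top}(z^k-z^{k-1})$ term rather than applying Young first and then the triangle bounds, which yields identical coefficients).
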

\begin{proof}
    See Appendix \ref{app:res-extra-per-iter-conv}.
\end{proof}

We make the following observations based on Lemma \ref{lem:res-extra-per-iter-conv}:
\begin{enumerate}
    \item To be able to rectify the bounds and relate only to the quantities $\|z^{k+1}-z^*\|^2$, $\|z^k-z^*\|^2$ and $\|z^{k-1}-z^*\|^2$, we need to let the coefficients of the terms $\|z^{k+1}-z^{k+0.5}\|^2$ and $\|z^{k+0.5}-z^k\|^2$ be non-positive, and let the coefficient of $F(z^k)^{\top}(z^{k+1}-z^{k+0.5})$ be 0.
    \item A valid linear reduction requires:
    \begin{equation*}
        0<\tau L<\alpha\mu-4\gamma-2|\gamma-\beta|-\tau L<1.
    \end{equation*}
    \item The coefficient of $\|z^{k-1}-z^*\|^2$ need be less than the difference between the coefficients of $\|z^{k+1}-z^*\|^2$ and $\|z^k-z^*\|^2$, i.e.
    \begin{equation*}
        2\gamma+2|\gamma-\beta|+2\tau L< (\alpha\mu-4\gamma-2|\gamma-\beta|-2\tau L)-\tau L.
    \end{equation*}
\end{enumerate}

The above three observations lead the relation in Lemma \ref{lem:res-extra-per-iter-conv} to take the form
\begin{equation*}
    (1-u)\|z^{k+1}-z^*\|^2\leq (1-s)\|z^k-z^*\|^2+t\|z^{k-1}-z^*\|^2,\quad 0\leq t\leq s-u<1-u.
\end{equation*}
Dividing both sides by $1-u$, we have
\begin{equation*}
    \|z^{k+1}-z^*\|^2\leq \left(1-\frac{s-u}{1-u}\right)\|z^k-z^*\|^2+\frac{t}{1-u}\|z^{k-1}-z^*\|^2,
\end{equation*}
which is in the form of \eqref{lin-ab} with
   $ a = \frac{s-u}{1-u},\quad b = \frac{t}{1-u}$.
By Lemma \ref{lem:theta-range}, we obtain a R-linear convergence, which takes a similar form as that in Theorem \ref{th:ex-point-result}. The result is summarized below.

\begin{theorem}
\label{th:ex-point-result-2}
For solving VI model \eqref{vi-prob}, with the mapping $F:\mathcal{Z}\rightarrow\mathbb{R}^n$ being Lipschitz continuous with constant $L$ and strongly monotone with constant $\mu$, the sequence $\{z^k\}$, $k=0,1,2,...$ generated from the extra-point approach \eqref{update:extra-point-2} yields a R-linear convergence as the following:
\begin{equation*}
    \|z^{k+1}-z^*\|^2+\theta\|z^k-z^*\|^2\leq (1-(a-\theta))\|z^k-z^*\|^2+\theta(1-(a-\theta))\|z^{k-1}-z^*\|^2,
\end{equation*}
where
\begin{equation*}
    \begin{array}{ll}
         & a= \frac{\alpha\mu+4\gamma+2|\gamma-\beta|-3\tau L}{1-\tau L},\\
         & b<\frac{\sqrt{(1-a)^2+4b}-(1-a)}{2}\leq \theta <a,\\
         & b = \frac{2\gamma+2|\gamma-\beta|+2\tau L}{1-\tau L}, \\
    \end{array}
\end{equation*}
provided that the parameters $\alpha,\beta,\gamma,\eta,\tau$ in~\eqref{update:extra-point-2} satisfy
\begin{equation}
    \label{exp-2-const}
    \left\{
    \begin{array}{ll}
         0\leq\tau L <  \alpha\mu-4\gamma-2|\gamma-\beta|-2\tau L<1,
         \\
         2\gamma+2|\gamma-\beta|+2\tau L< (\alpha\mu-4\gamma-2|\gamma-\beta|-2\tau L)-\tau L,
         \\
         \alpha L+|\gamma-\beta|-1\leq 0,\\
         \alpha L+2\alpha\mu+\tau L+2\gamma-1 \leq 0,\\
         \eta=\alpha, \\
         \alpha,\beta,\gamma,\eta,\tau \geq 0.
    \end{array}
    \right.
\end{equation}
\end{theorem}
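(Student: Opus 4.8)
The plan is to derive the stated contraction by feeding the per-iteration estimate of Lemma~\ref{lem:res-extra-per-iter-conv} into Lemma~\ref{lem:theta-range}, exactly as the discussion preceding the theorem anticipates; the constraints~\eqref{exp-2-const} are engineered precisely to make this pipeline go through. Since the argument parallels the template already carried out for Theorem~\ref{th:ex-point-result}, the real content is to verify that each constraint plays its designated role.

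First I would start from inequality~\eqref{exp-1-ineq-result}, whose right-hand side contains, besides the distances $\|z^k-z^*\|^2$ and $\|z^{k-1}-z^*\|^2$, three unwanted contributions. The inner-product term $2(\eta-\alpha)F(z^k)^\top(z^{k+1}-z^{k+0.5})$ carries no definite sign, so it cannot merely be bounded away; this is why an \emph{equality} constraint $\eta=\alpha$ is imposed, annihilating it outright. The remaining two terms are squared norms with coefficients $\alpha L+|\gamma-\beta|-1$ and $\alpha L+2\alpha\mu+2\gamma-1$; the third and fourth lines of~\eqref{exp-2-const} force both to be non-positive (the fourth is deliberately strengthened by an extra $\tau L$), so these terms may be discarded while preserving the inequality. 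What survives is the clean two-term recursion
\[
(1-\tau L)\|z^{k+1}-z^*\|^2 \leq (1-\alpha\mu+4\gamma+2|\gamma-\beta|+2\tau L)\|z^k-z^*\|^2 + (2\gamma+2|\gamma-\beta|+2\tau L)\|z^{k-1}-z^*\|^2 .
\]

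Next I would normalize this to the template~\eqref{lin-ab}. Reading off $u=\tau L$, $s=\alpha\mu-4\gamma-2|\gamma-\beta|-2\tau L$, and $t=2\gamma+2|\gamma-\beta|+2\tau L$, the first line of~\eqref{exp-2-const} yields $0\leq u<s<1$ (hence $1-\tau L>0$) and the second line is exactly $t<s-u$; together these give the required ordering $0\leq t\leq s-u<1-u$. Dividing the recursion by $1-\tau L$ puts it in the form~\eqref{lin-ab} with $a=\frac{s-u}{1-u}$ and $b=\frac{t}{1-u}$, reproducing the expressions for $a$ and $b$ in the theorem, after which Lemma~\ref{lem:theta-range} delivers the claimed two-step contraction for any admissible $\theta$, where one may again take $\theta=\frac{a+b}{2}$.

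The genuinely hard work is upstream, in proving Lemma~\ref{lem:res-extra-per-iter-conv} itself (the projection optimality conditions, co-coerciveness, and Young-type splittings needed to produce~\eqref{exp-1-ineq-result}); granting that lemma, the only delicate point at this stage is sign-bookkeeping — confirming that every discarded term is dropped in the favorable direction and that the chain $0\leq t\leq s-u<1-u$ survives, which is precisely the purpose of the three observations stated just before the theorem.
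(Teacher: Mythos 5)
Your proposal is correct and follows essentially the same route as the paper: the authors also prove this theorem by dropping the three unwanted terms of \eqref{exp-1-ineq-result} using the constraints \eqref{exp-2-const} (with $\eta=\alpha$ killing the sign-indefinite inner product and the strengthened fourth constraint absorbing the extra $\tau L$), dividing by $1-\tau L$ to reach the form \eqref{lin-ab} with $a=\frac{s-u}{1-u}$, $b=\frac{t}{1-u}$, and invoking Lemma~\ref{lem:theta-range}. One small remark: your (correct) derivation yields $a=\frac{\alpha\mu-4\gamma-2|\gamma-\beta|-3\tau L}{1-\tau L}$, whereas the theorem statement writes $+4\gamma+2|\gamma-\beta|$ in the numerator — an apparent sign typo in the paper rather than a gap in your argument.
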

\begin{proof}
    The proof follows directly from Lemma \ref{lem:res-extra-per-iter-conv} and Lemma \ref{lem:theta-range}.
\end{proof}

As an example, we may choose
    $\alpha=\eta=\frac{1}{4L}$, $\beta=\gamma=\frac{\mu}{64L}$, and $\tau=\frac{\mu}{64L^2}$
to satisfy \eqref{exp-2-const}. Then,
\begin{equation*}
    \left(1-\frac{\mu}{64L}\right)\|z^{k+1}-z^*\|^2\leq \left(1-\frac{5\mu}{32L}\right)\|z^k-z^*\|^2+\frac{\mu}{16L}\|z^{k-1}-z^*\|^2.
\end{equation*}
Denote $\frac{\mu}{L}=\sigma$ and divide both sides with $1-\frac{\sigma}{64}$, we get:
\begin{equation*}
    \begin{array}{lcl}
        \|z^{k+1}-z^*\|^2 & \leq & \left(1-\frac{\frac{9\sigma}{64}}{1-\frac{\sigma}{64}}\right)\|z^k-z^*\|^2+\frac{\frac{\sigma}{16}}{1-\frac{\sigma}{64}}\|z^{k-1}-z^*\|^2\\
         & = & \left(1-\frac{9\sigma}{64-\sigma}\right)\|z^k-z^*\|^2+\frac{\frac{\sigma}{16}}{1-\frac{\sigma}{64}}\|z^{k-1}-z^*\|^2 \\
         & \leq & \left(1-\frac{9\sigma}{64}\right)\|z^k-z^*\|^2+\frac{4\sigma}{63}\|z^{k-1}-z^*\|^2.
    \end{array}
\end{equation*}
Similarly, by fixing $\theta=\frac{a+b}{2}$, and with such choices of parameters, the reduction rate is guaranteed to be at least $1-\frac{1}{2}\cdot\left(\frac{9\sigma}{64}-\frac{4\sigma}{63}\right)<1-\frac{\sigma}{32}$. The resulting iteration complexity therefore reaches the {\it optimal} order of magnitude
 $   \mathcal{O}\left(\kappa %\frac{1}{\sigma}
 \ln\left(\frac{1}{\epsilon}\right)\right)$,
for finding an $\epsilon$-solution.

\section{An Extra-Point Approach to Strongly Convex Optimization}
\label{sec:opt}

Although convex optimization is a subclass of VI, its {\it optimal}\/ iteration complexity for convex optimization is also lower: it is $\mathcal{O}\left(\sqrt{\kappa}\, \ln \frac{1}{\epsilon}\right)$ instead of $\mathcal{O}\left(\kappa \ln \frac{1}{\epsilon}\right)$. Therefore, it requires a separate treatment if we wish to extend the general extra-point approach to strongly convex minimization with the corresponding optimal iteration complexity.

\subsection{Background and preparations}
\label{opt-lit}
One of the first methods that utilizes the concept of ``momentum'' may be traced back to the heavy-ball method \eqref{heavy-ball-update} proposed by Polyak in \cite{polyak1964some}. However, the accelerated convergence rate $1-\sqrt{\sigma}$ is only valid for a quadratic objective function, instead of general strongly convex function.
The first accelerated method for general convex optimization is Nesterov's method proposed in \cite{nesterov1983method} for general smooth convex functions.
The iteration complexity of Nesterov's accelerated method is $\mathcal{O}(1/\sqrt{\epsilon})$ for general convex optimization.
In \cite{nesterov2003introductory}, Nesterov introduced a variant of the method to solve
strongly convex optimization, with a $1-\sqrt{\sigma}$ linear convergence rate. Among its many equivalent forms, perhaps one of the most well-known form of Nesterov's method is the following updating formula
\begin{equation*}
    \left\{
    \begin{array}{ccl}
         y^k     &=&  \frac{1}{1+\sqrt{\sigma}}x^k+\frac{\sqrt{\sigma}}{1+\sqrt{\sigma}} v^k,  \\
         x^{k+1} &=& y^k-\frac{1}{L}\nabla f(y^k), \\
         v^{k+1} &=&  (1-\sqrt{\sigma}) v^k+\sqrt{\sigma}(y^k-\frac{1}{\mu}\nabla f(y^k)).
    \end{array}
    \right.
\end{equation*}
Choosing initial points appropriately, the above can be further reduced to the following updating rule:
\begin{equation}
    \label{Nes-reduced-form}
    \left\{
    \begin{array}{ccl}
         y^k &=& x^k+\frac{1-\sqrt{\sigma}}{1+\sqrt{\sigma}}(x^k-x^{k-1}), \\
         x^{k+1} &=& y^k-\frac{1}{L}\nabla f(y^k),
    \end{array}
    \right.
\end{equation}
which is the form that we used earlier.

In recent years, many other accelerated first-order methods have been proposed to achieve the $1-\sqrt{\sigma}$ convergence rate; some of them even have a better constant subsumed in the big O notation, compared to Nesterov's method. For example, the geometric descent method proposed in Bubeck {\it et al.}~\cite{bubeck2015geometric} updates some ball that contains the optimal solution $x^*$ throughout the iterations while trying to reduce its radius. The quadratic averaging method of Drusvyatskiy {\it et al.}~\cite{drusvyatskiy2018optimal} attempts to maximize the minimum of the convex combination of two quadratic lower bounds at each iteration, which actually produces the same iterative sequence as the geometric descent method.
The triple momentum method of Van Scoy {\it et al.}~\cite{van2017fastest} has the following general form:
\begin{equation*}
    \left\{
    \begin{array}{ccl}
         x^{k+1} &=& (1+\beta)x^k-\beta x^{k-1}-\alpha\nabla f(y^k), \\
         y^k &=& (1+\gamma)x^k-\gamma x^{k-1},\\
         v^k &=& (1+\delta)x^k-\delta x^{k-1}.
    \end{array}
    \right.
\end{equation*}
Although the method itself uses a different parameter choice of $\alpha,\beta,\gamma,\delta$, such general form can include heavy-ball method ($\gamma=\delta=0$) and Nesterov's fixed step size method ($\beta=\gamma$, $\delta=0$) as special cases. The information-theoretic exact method (ITEM) is recently proposed by Taylor and Drori~\cite{taylor2021optimal}, which has a similar form as Nesterov's method:
\begin{equation*}
    \left\{
    \begin{array}{ccl}
         y^k &=& (1-\tau_k)x^k+\tau_k v^k,  \\
         x^{k+1} &=& y^k-\frac{1}{L}\nabla f(y^k), \\
         v^{k+1}  &=&  (1-\sigma\delta_k) v^k+\delta_k(\sigma y^k-\frac{1}{L}\nabla f(y^k)).
    \end{array}
    \right.
\end{equation*}
However, different choices of the parameters $\delta_k,\tau_k$ and different potential function are used in the analysis. The ITEM achieves the exact lower bound in terms of the measurement $\frac{\|x^N-x^*\|^2}{\|x^0-x^*\|^2}$ for a given iteration number $N$ (see Drori and Taylor~\cite{drori2021oracle}). The method reduces to the triple momentum method by taking the parameters $\delta_k,\tau_k$ the values of their limits, and it reduces to the optimized gradient method of Kim and Fessler~\cite{kim2016optimized} by taking $\mu=0$.
The results in Karimi and Vavasis~\cite{karimi2016unified} establish a unified analysis for the conjugate gradient method and Nesterov's accelerated method, showing that the progress of both algorithms can be measured by the decrease of potential functions of the same form. A follow-up work by the same authors in \cite{karimi2017single} further includes the geometric descent in the analysis. In the recent monograph by d'Aspremont {\it et al.}~\cite{d2021acceleration}, an extensive survey is provided on the accelerated methods, including both convex and strongly convex optimization.

The classic reference on the information-theoretic results for lower iteration complexity bounds is Nemirovski and Yudin~\cite{nemirovsky1983problem}. In Nemirovski~\cite{nemirovsky1992information}, the lower complexity bound for convex quadratic optimization is established. Discussions on the iteration complexity lower bounds for general convex optimization $\mathcal{O}(1/\sqrt{\epsilon})$ and for strongly convex optimization $\mathcal{O}(\kappa\ln(1/\epsilon))$ can be found in Nesterov's monograph~\cite{nesterov2003introductory}. The most recent development regarding the iteration lower bound is in~\cite{drori2021oracle}, where the authors establish an exact lower bound for strongly convex optimization, in the sense that it is achieved up to a constant by ITEM.

Consider the following optimization model:
\begin{equation}
    \label{opt-prob}
    \min\limits_{x\in\mathbb{R}^n} f(x),
\end{equation}
where $f(x)$ is strongly convex with modulus $\mu$ and $\nabla f(x)$ is Lipschitz continuous with constant $L$. Therefore,  %and differentiable, implying:
\begin{equation*}
     f(x)+\nabla f(x)^{\top}(y-x)+\frac{\mu}{2}\|y-x\|^2 \le f(y) \leq f(x)+\nabla f(x)^{\top}(y-x)+\frac{L}{2}\|y-x\|^2,\,\,\, \forall x,y.
\end{equation*}
Recall that
    $\sigma = \frac{\mu}{L}$ and $\kappa = \frac{L}{\mu}$.
The vanilla gradient descent method with fixed step size has the following update:
\begin{equation*}
    x^{k+1} = x^k-\frac{1}{L}\nabla f(x^k).
\end{equation*}

We have
\begin{equation*}
    \begin{array}{lcl}
         f(x^{k+1}) & \leq & f(x^k)+\nabla f(x^k)^{\top}(x^{k+1}-x^k)+\frac{L}{2}\|x^{k+1}-x^k\|^2  \\
         &=& f(x^k)-\frac{1}{L}\|\nabla f(x^k)\|^2+\frac{1}{2L}\|\nabla f(x^k)\|^2 \\
         &= &f(x^k)-\frac{1}{2L}\|\nabla f(x^k)\|^2 \\
         &\leq & f(x)-\nabla f(x^k)^{\top}(x-x^k)-\frac{\mu}{2}\|x-x^k\|^2-\frac{1}{2L}\|\nabla f(x^k)\|^2,
    \end{array}
\end{equation*}
for any $x$.

Substitute $x^k$ and $x^*$ for $x$ in the above inequality, we get:
\begin{equation*}
    \left\{
    \begin{array}{ll}
         f(x^{k+1})\leq f(x^k)-\frac{1}{2L}\|\nabla f(x^k)\|^2, \\
         f(x^{k+1})\leq f(x^*)-\nabla f(x^k)^{\top}(x^*-x^k)-\frac{\mu}{2}\|x^*-x^k\|^2-\frac{1}{2L}\|\nabla f(x^k)\|^2.
    \end{array}
    \right.
\end{equation*}
Summing up the two inequalities with the first one multiplied by $1-\theta$ and the second one multiplied by $\theta$, we get:
\begin{equation}
    \label{grad-desc-conv}
    \begin{array}{lcl}
         f(x^{k+1})-f(x^*)&\leq & (1-\theta)(f(x^k)-f(x^*))+\theta\nabla f(x^k)^{\top}(x^k-x^*)-\frac{\mu\theta}{2}\|x^k-x^*\|^2-\frac{1}{2L}\|\nabla f(x^k)\|^2 \\
         & \leq  &(1-\theta)(f(x^k)-f(x^*)) +\frac{\theta}{2\mu}\|\nabla f(x^k)\|^2-\frac{1}{2L}\|\nabla f(x^k)\|^2 \\
         & \overset{\theta\leq \frac{\mu}{L}}{\leq} & (1-\theta)(f(x^k)-f(x^*)).
    \end{array}
\end{equation}

In the second inequality, the following bound is used:
\begin{equation*}
    \theta\nabla f(x^k)^{\top}(x^k-x^*)-\frac{\mu\theta}{2}\|x^k-x^*\|^2\leq \frac{\theta}{2\mu}\|\nabla f(x^k)\|^2.
\end{equation*}
Since $\theta\leq \frac{\mu}{L}=\sigma$, the per-iteration convergence rate is $1-\sigma$, giving an iteration complexity $\kappa\ln(1/\epsilon)$ in order to get $f(x^k)-f(x^*)\leq \epsilon$.

\subsection{An extra-point scheme}
\label{sec:opt-ex-point}
As shown in \eqref{grad-desc-conv}, directly bounding the term
\begin{equation}
    \label{opt-unwanted-terms}
    \theta\nabla f(x^k)^{\top}(x^k-x^*)-\frac{\mu\theta}{2}\|x^k-x^*\|^2-\frac{1}{2L}\|\nabla f(x^k)\|^2
\end{equation}
requires $\theta$ to be small enough, in particular, in the order of $\sigma$. This restricts the convergence rate to be $1-\sigma$ instead of the optimal $1-\sqrt{\sigma}$. However, this motivates us to introduce some extra point, from which the next iterate $x^{k+1}$ can be obtained from taking a gradient step there. In addition, instead of just taking $f(x^k)-f(x^*)$ as our measurement of the progress of the algorithm, we shall construct a potential function from an extra sequence of points $\{v^k\}$:
\begin{equation*}
    f(x^k)-f(x^*)+C\|v^k-x^*\|^2
\end{equation*}
for some constant $C$, specifically to help cancel out the terms similar to those in \eqref{opt-unwanted-terms} instead of directly bounding them.

Let us now consider the following iterative procedure
\begin{equation} \label{extra-points}
\left\{
\begin{array}{ccl}
p^k &:=& t_1 x^k + t_2 v^k \\
y^k & \mbox{satisfies} & \nabla f(y^k)^{\top}(p^k-y^k)\leq 0 \\
z^k &:=& y^k - \frac{t_3}{L} \nabla f(y^k) \\
x^{k+1} &:=& y^k - \frac{t_4}{L} \nabla f(z^k)-\frac{t_5}{L} \left(\nabla f(z^k)-\nabla f(y^k)\right)+t_6(z^k-y^k) \\
v^{k+1} &:=& t_7 v^k + t_8 y^k - t_9 \nabla f(y^k) ,
\end{array}
\right.
\end{equation}
where $t_i>0$, $i=1,...,9$, are some positive parameters. As examples, $y^k$ may be taken simply as $p^k$, or perhaps $y^k:=p^k-\frac{1}{L}\nabla f(p^k)$ among other choices.

To see how the extra points help in procedure \eqref{extra-points}, consider the following analysis. First, we have:
\begin{eqnarray} \label{ub1}
f(x^{k+1}) & \le & f(y^k) + \nabla f(y^k)^\top (x^{k+1}-y^k) + \frac{L}{2} \| x^{k+1} - y^k\|^2 \nonumber \\
&\le& f(x) - \nabla f(y^k)^\top (x-y^k) - \frac{\mu}{2} \| x - y^k\|^2 -\frac{t_4}{L} \nabla f(y^k)^\top \nabla f(z^k) + \frac{t_4^2}{2L} \| \nabla f(z^k)\|^2 \nonumber \\
&& -\frac{t_5}{L}\left(\nabla f(y^k)^{\top}\nabla f(z^k)-\|\nabla f(y^k)\|^2\right)-\frac{t_3t_6}{L}\|\nabla f(y^k)\|^2,
\end{eqnarray}
for any $x\in \mathbb{R}^n$. At the same time, let us set $0\le t_3<1$, and we derive
\begin{eqnarray}
\nabla f(y^k)^\top \nabla f(z^k) &=& \nabla f(y^k)^\top \left( \nabla f(y^k) + \nabla f(z^k) - \nabla f(y^k))\right) \nonumber \\
&\ge& \| \nabla f(y^k)\|^2 - \| \nabla f(y^k)\| \cdot L \|z^k-y^k\| \nonumber \\
&=& (1-t_3) \| \nabla f(y^k)\|^2 , \label{ub2}
\end{eqnarray}
and
\begin{eqnarray}
\| \nabla f(z^k)\|^2 &\le& \left( \| \nabla f(y^k)\| + \| \nabla f(z^k) - \nabla f(y^k) \| \right)^2 \nonumber \\
&\le& \left( \| \nabla f(y^k)\| + L \| z^k - y^k \| \right)^2 \nonumber \\
&=& (1+t_3)^2 \| \nabla f(y^k)\|^2. \label{ub3}
\end{eqnarray}
Summing up \eqref{ub2} and \eqref{ub3}, it follows from \eqref{ub1} that
\begin{equation} \label{ub4}
f(x^{k+1}) \le f(x) - \nabla f(y^k)^\top (x-y^k) - \frac{\mu}{2} \| x - y^k\|^2 - \frac{2t_4(1-t_3)-(1+t_3)^2t_4^2+2t_3(t_6-t_5)}{2L} \| \nabla f(y^k)\|^2 .
\end{equation}
Taking $x=x^k$ and $x=x^*$ respectively, where $x^*$ is the minimizer of $f$, we have
\begin{equation} \label{ineq1}
f(x^{k+1}) \le f(x^k) - \nabla f(y^k)^\top (x^k-y^k) - \frac{\mu}{2} \| x^k - y^k\|^2 - \frac{2t_4(1-t_3)-(1+t_3)^2t_4^2+2t_3(t_6-t_5)}{2L} \| \nabla f(y^k)\|^2
\end{equation}
and
\begin{equation} \label{ineq2}
f(x^{k+1}) \le f(x^*) - \nabla f(y^k)^\top (x^*-y^k) - \frac{\mu}{2} \| x^* - y^k\|^2 - \frac{2t_4(1-t_3)-(1+t_3)^2t_4^2+2t_3(t_6-t_5)}{2L} \| \nabla f(y^k)\|^2 .
\end{equation}
For a given $0<\theta <1$, let us multiply $1-\theta$ on both sides of \eqref{ineq1} and $\theta$ on both sides of \eqref{ineq2}, and then sum up the two inequalities. We obtain
\begin{eqnarray} \label{bound1}
&  & f(x^{k+1}) - f(x^*) \nonumber \\
&\le& (1-\theta) \left( f(x^k) - f(x^*)\right) + \nabla f(y^k)^\top \left[ (1-\theta) (y^k-x^k) + \theta (y^k - x^*)  \right]
 - \frac{\theta \mu}{2} \| x^* - y^k\|^2 \nonumber   \\
& & - \frac{\mu(1-\theta)}{2} \| x^k-y^k\|^2
- \frac{2t_4(1-t_3)-(1+t_3)^2t_4^2+2t_3(t_6-t_5)}{2L} \| \nabla f(y^k)\|^2 .
\end{eqnarray}

Now, instead of trying to directly bound the last four terms on the right hand side of \eqref{bound1}, the point $v^k$ comes to help cancel them out, as shown below.
Referring to \eqref{extra-points}, let us choose $t_7+t_8=1$. We have
\begin{eqnarray}
& & \| v^{k+1} - x^* \|^2 \nonumber \\
& \le & t_7 \| v^k - x^* \|^2 + t_8 \| y^k - x^*\|^2 + t_9^2 \| \nabla f(y^k)\|^2
 - 2 t_9 \nabla f(y^k)^\top \left[ t_7 v^k + t_8 y^k - x^* \right] . \label{bound2}
\end{eqnarray}
Selecting a parameter $C>0$, and summing \eqref{bound1} with \eqref{bound2} (multiplying $C$), we have
\begin{eqnarray}
& &  f(x^{k+1}) - f(x^*) + C \| v^{k+1} - x^* \|^2 \nonumber \\
&\le& (1-\theta) ( f(x^k) - f(x^*) ) + C t_7 \| v^k - x^* \|^2 \nonumber \\
& & + \left( C t_8 - \frac{\theta \mu}{2} \right)  \| y^k - x^*\|^2 + \left( C t_9^2 -  \frac{2t_4(1-t_3)-(1+t_3)^2t_4^2+2t_3(t_6-t_5)}{2L} \right)  \| \nabla f(y^k)\|^2 \nonumber \\
& & + \nabla f(y^k)^\top \left[  (1-\theta) (y^k-x^k) + \theta (y^k - x^*) - 2 t_9 C ( t_7 v^k + t_8 y^k - x^*) \right] .
\label{rate}
\end{eqnarray}

Now we shall choose the parameters so that the last three terms in \eqref{rate} are non-positive so that they can be dropped from the inequality. Summarizing, the requirements are:
\begin{equation}
\label{opt-ext-const}
\left\{
\begin{array}{ccl}
\theta &=& 2 t_9 C \\
t_1 &=& \frac{1-\theta}{1-2 t_8 t_9 C} \\
t_2 &=& \frac{2t_7 t_9 C}{1-2 t_8 t_9 C} \\
t_3 &<&1 \\
t_7 &\le& 1-\theta \\
t_8 &=& 1 - t_7 \\
t_8 C &\le& \frac{\mu\theta}{2} \\
t_9^2 C &\le & \frac{2t_4(1-t_3)-(1+t_3)^2t_4^2+2t_3(t_6-t_5)}{2L}.
\end{array}
\right.
\end{equation}
%{\color{blue} with $C,t_i\ge0$, $i=1,2,...,9$ and $0<\theta<1$.}

In particular, for the last term in \eqref{rate} we have:
\begin{eqnarray*}
        &&\nabla f(y^k)^\top \left[  (1-\theta) (y^k-x^k) + \theta (y^k - x^*) - 2 t_9 C ( t_7 v^k + t_8 y^k - x^*) \right] \\
        & = &\nabla f(y^k)^\top \left[  (1-2t_8t_9C)y^k-(1-\theta)x^k-2t_7t_9Cv^k \right]  \\
         & = &\nabla f(y^k)^\top \left[  (1-2t_8t_9C)y^k-(1-2t_8t_9C)p^k \right] \\
         & = &(1-2t_8t_9C)\nabla f(y^k)^{\top}(y^k-p^k)\leq 0.
\end{eqnarray*}

We summarize our findings and arrive at the following theorem:
\begin{theorem}
\label{th:opt-extra-1}
For an unconstrained optimization model \eqref{opt-prob} with the objective $f(x)$ being strongly convex with modulus $\mu>0$ and gradient Lipschitz with constant $L>0$, the sequence $\{x^k\}$, $k=0,1,2,...$ generated by the extra-point scheme \eqref{extra-points} converges linearly to the optimal solution:
\begin{equation*}
    f(x^{k+1})-f(x^*)+C\|v^{k+1}-x^*\|^2\leq (1-\theta)\left(f(x^k)-f(x^*)+C\|v^k-x^*\|^2\right)
\end{equation*}
for properly chosen $0<\theta<1$ and $C,t_i\geq 0$ for $i=1,2,..,9$ satisfying Condition~\eqref{opt-ext-const}.
\end{theorem}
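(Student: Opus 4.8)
The plan is to establish a one-step decrease of the Lyapunov/potential function $\Phi^k := f(x^k) - f(x^*) + C\|v^k - x^*\|^2$, showing $\Phi^{k+1} \le (1-\theta)\Phi^k$; linear convergence then follows by iterating. Concretely, I would assemble the chain of inequalities \eqref{ub1}--\eqref{rate} under the parameter relations \eqref{opt-ext-const} and verify that every residual term on the right-hand side of \eqref{rate} is non-positive and hence may be discarded.

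First I would start from the $L$-smoothness upper bound on $f(x^{k+1})$ expanded around $y^k$, substitute the update $x^{k+1} = y^k - \frac{t_4}{L}\nabla f(z^k) - \frac{t_5}{L}(\nabla f(z^k) - \nabla f(y^k)) + t_6(z^k - y^k)$, and combine with the $\mu$-strong-convexity lower bound $f(y^k) \ge f(x) - \nabla f(y^k)^\top(x - y^k) - \frac{\mu}{2}\|x - y^k\|^2$, giving \eqref{ub1} for every $x$. The extra gradient step $z^k = y^k - \frac{t_3}{L}\nabla f(y^k)$, for which $\|z^k - y^k\| = \frac{t_3}{L}\|\nabla f(y^k)\|$, then serves to convert all $\nabla f(z^k)$ quantities into $\nabla f(y^k)$ via the Lipschitz estimates \eqref{ub2} and \eqref{ub3}, collapsing \eqref{ub1} into \eqref{ub4}, where the only gradient term is $\|\nabla f(y^k)\|^2$ with coefficient $\frac{2t_4(1-t_3) - (1+t_3)^2 t_4^2 + 2t_3(t_6 - t_5)}{2L}$.

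Next I would instantiate $x = x^k$ and $x = x^*$ in \eqref{ub4} and take the convex combination with weights $1-\theta$ and $\theta$ to obtain \eqref{bound1}. Separately, expanding $\|v^{k+1} - x^*\|^2$ from $v^{k+1} = t_7 v^k + t_8 y^k - t_9 \nabla f(y^k)$ and using convexity of $\|\cdot\|^2$ with $t_7 + t_8 = 1$ yields \eqref{bound2}. Adding $C$ times \eqref{bound2} to \eqref{bound1} produces \eqref{rate}, whose right-hand side splits into the target $(1-\theta)(f(x^k) - f(x^*)) + C t_7\|v^k - x^*\|^2$ plus three residuals: a multiple of $\|y^k - x^*\|^2$, a multiple of $\|\nabla f(y^k)\|^2$, and a linear-in-gradient cross term.

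The hard part will be choosing the nine parameters together with $C$ and $\theta$ so that each residual is non-positive. The two norm terms are routine: $t_8 C \le \frac{\mu\theta}{2}$ annihilates the $\|y^k - x^*\|^2$ coefficient, and bounding $t_9^2 C$ by the gradient coefficient from \eqref{ub4} kills the $\|\nabla f(y^k)\|^2$ term, which forces $2t_4(1-t_3) - (1+t_3)^2 t_4^2 + 2t_3(t_6 - t_5) > 0$, a genuine design constraint on $t_3, t_4, t_5, t_6$. The delicate term is the cross term $\nabla f(y^k)^\top[(1-\theta)(y^k - x^k) + \theta(y^k - x^*) - 2t_9 C(t_7 v^k + t_8 y^k - x^*)]$: setting $\theta = 2t_9 C$ cancels the $x^*$ contributions, and collecting the remaining coefficients reduces the bracket exactly to $(1 - 2t_8 t_9 C)(y^k - p^k)$ precisely when $t_1 = \frac{1-\theta}{1 - 2t_8 t_9 C}$ and $t_2 = \frac{2t_7 t_9 C}{1 - 2t_8 t_9 C}$, i.e. when $p^k = t_1 x^k + t_2 v^k$. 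The defining inequality $\nabla f(y^k)^\top(p^k - y^k) \le 0$ imposed on $y^k$ then renders this term non-positive. Finally, $t_7 \le 1-\theta$ ensures $C t_7 \le (1-\theta)C$, so the $\|v^k - x^*\|^2$ term folds into $(1-\theta)\Phi^k$ and the recursion closes, yielding the stated linear rate.
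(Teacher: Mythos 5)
Your proposal is correct and follows essentially the same route as the paper: the smoothness/strong-convexity bound \eqref{ub1}, the conversion of $\nabla f(z^k)$ into $\nabla f(y^k)$ via \eqref{ub2}--\eqref{ub3}, the convex combination at $x=x^k$ and $x=x^*$, the addition of $C$ times \eqref{bound2}, and the same parameter conditions \eqref{opt-ext-const} forcing the three residual terms in \eqref{rate} to be non-positive. The only slip is a harmless sign typo in your statement of the strong-convexity inequality (it should read $f(y^k)\le f(x)-\nabla f(y^k)^\top(x-y^k)-\frac{\mu}{2}\|x-y^k\|^2$), which you nevertheless use in the correct direction.
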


We are left with the remaining final question: Whether or not Condition \eqref{opt-ext-const} is satisfiable at all? The answer is yes. For instance, we may choose any $0 < \delta < 1$ and then let
\begin{equation}
\label{opt-extra-par-choice}
\left\{
\begin{array}{l}
\theta = \sqrt{\frac{\mu}{L}} ,\,
t_1 = \frac{1}{1+\theta} ,\,
t_2 = \frac{\theta}{1+\theta} ,\,
t_3 = \delta ,\,
t_4 = \frac{1-\delta}{(1+\delta)^2} ,\,
t_5 = \frac{1}{(1+\delta)^2} ,\, \\
t_6 = \frac{3}{(1+\delta)^2} ,\,
t_7 = 1-\theta ,\,
t_8 = \theta ,\,
t_9 = \frac{1}{\sqrt{\mu L}} ,\,
C = \frac{\mu}{2} .
\end{array}
\right.
\end{equation}

If we further take $y^k = p^k$, procedure \eqref{extra-points} can be simplified to
\begin{equation} \label{extra-points-2}
\left\{
\begin{array}{ccl}
y^k &:=& \frac{ x^k + \theta v^k}{1+\theta} \\
z^k &:=& y^k - \frac{\delta}{L} \nabla f(y^k) \\
x^{k+1} &:=& y^k - \frac{1-\delta}{(1+\delta)^2 L} \nabla f(z^k)-\frac{1}{(1+\delta)^2L}\left(\nabla f(z^k)-\nabla f(y^k)\right)+\frac{3}{(1+\delta)^2}(z^k-y^k) \\
v^{k+1} &:=& (1-\theta) v^k + \frac{\theta(\mu\delta - L)}{\mu\delta} y^k +\frac{\theta L}{\mu \delta} z^k,
\end{array}
\right.
\end{equation}
and we have:
\begin{equation*}
    f(x^{k+1})-f(x^*)+\frac{\mu}{2}\|v^{k+1}-x^*\|^2\leq \left(1-\sqrt{\sigma}\right)\left(f(x^{k})-f(x^*)+\frac{\mu}{2}\|v^{k}-x^*\|^2\right).
\end{equation*}

The results are summarized in the next theorem:
\begin{theorem}
\label{th:opt-extra-2}
Following up on Theorem \ref{th:opt-extra-1}, if we further specify the parameters as in \eqref{opt-extra-par-choice}, then scheme \eqref{extra-points} reduces to \eqref{extra-points-2}. With the choice $v^0=x^0$, the sequence $\{x^k\}$ converges linearly to $x^*$ at the {\it optimal}\/ rate
\begin{equation*}
    f(x^{k})-f(x^*)\leq 2\left(1-\sqrt{\sigma}\right)^{k}\left(f(x^{0})-f(x^*)\right).
\end{equation*}
\end{theorem}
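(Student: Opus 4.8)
The plan is to treat this as a three-part verification resting on Theorem~\ref{th:opt-extra-1}, which already supplies the one-step potential decrease once Condition~\eqref{opt-ext-const} is met. So the work reduces to (i) checking that the explicit choice~\eqref{opt-extra-par-choice} satisfies~\eqref{opt-ext-const}, (ii) confirming that under $y^k=p^k$ scheme~\eqref{extra-points} collapses to~\eqref{extra-points-2}, and (iii) telescoping the resulting contraction and converting it into a bound on $f(x^k)-f(x^*)$ alone.

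First I would verify~\eqref{opt-ext-const}. Substituting $\theta=\sqrt{\sigma}$, $t_9=\frac{1}{\sqrt{\mu L}}$, and $C=\frac{\mu}{2}$ gives $2t_9C=\sqrt{\mu/L}=\theta$, and $2t_8t_9C=\theta^2$, so $1-2t_8t_9C=(1-\theta)(1+\theta)$, from which the prescribed $t_1=\frac{1}{1+\theta}$ and $t_2=\frac{\theta}{1+\theta}$ follow immediately. The relations $t_3=\delta<1$, $t_7=1-\theta$, $t_8=\theta=1-t_7$, and $t_8C=\frac{\mu\theta}{2}$ are direct (the last holding with equality). The only genuinely computational check is the final inequality $t_9^2C\le\frac{2t_4(1-t_3)-(1+t_3)^2t_4^2+2t_3(t_6-t_5)}{2L}$: with the given $t_3,\ldots,t_6$ the numerator simplifies to $\frac{(1-\delta)^2+4\delta}{(1+\delta)^2}$, and the algebraic identity $(1-\delta)^2+4\delta=(1+\delta)^2$ reduces this to $1$, while $t_9^2C=\frac{1}{2L}$, so the constraint holds with equality for every admissible $\delta$.

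Second I would confirm the reduction of~\eqref{extra-points} to~\eqref{extra-points-2} under $y^k=p^k$. The lines for $y^k$, $z^k$, and $x^{k+1}$ are obtained by direct substitution of the parameter values. The $v^{k+1}$ line requires a short rewriting: beginning from $v^{k+1}=t_7v^k+t_8y^k-t_9\nabla f(y^k)$, I would use $t_9=\frac{\theta}{\mu}$ and eliminate the gradient via $\nabla f(y^k)=\frac{L}{\delta}(y^k-z^k)$, read off from the definition of $z^k$; regrouping the $y^k$ and $z^k$ terms then yields exactly the coefficients $\frac{\theta(\mu\delta-L)}{\mu\delta}$ and $\frac{\theta L}{\mu\delta}$.

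Third I would iterate the potential decrease guaranteed by Theorem~\ref{th:opt-extra-1} to obtain $f(x^{k})-f(x^*)+\frac{\mu}{2}\|v^k-x^*\|^2\le(1-\sqrt{\sigma})^k\bigl(f(x^0)-f(x^*)+\frac{\mu}{2}\|v^0-x^*\|^2\bigr)$, then drop the nonnegative term on the left and invoke $v^0=x^0$. To produce the factor $2$ I would bound the initial potential using strong convexity at the minimizer: since $\nabla f(x^*)=0$, we have $\frac{\mu}{2}\|x^0-x^*\|^2\le f(x^0)-f(x^*)$, whence $f(x^0)-f(x^*)+\frac{\mu}{2}\|x^0-x^*\|^2\le 2\bigl(f(x^0)-f(x^*)\bigr)$, giving the claimed rate. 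I expect the only mildly delicate point to be the final constraint check in the first step, where the exact cancellation $(1-\delta)^2+4\delta=(1+\delta)^2$ is precisely what lets the gradient-norm coefficient meet the threshold and thereby secures the optimal $1-\sqrt{\sigma}$ factor.
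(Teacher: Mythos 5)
Your proposal is correct and follows exactly the route the paper intends (the paper leaves this theorem essentially unproved, stating only the parameter choice and the resulting potential inequality): verify that \eqref{opt-extra-par-choice} satisfies \eqref{opt-ext-const} (your key identity $(1-\delta)^2+4\delta=(1+\delta)^2$ giving equality in the last constraint is right), check the algebraic reduction to \eqref{extra-points-2}, iterate the contraction from Theorem \ref{th:opt-extra-1}, and obtain the factor $2$ from $\frac{\mu}{2}\|x^0-x^*\|^2\le f(x^0)-f(x^*)$. All computations check out; your write-up actually supplies details the paper omits.
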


\section{Numerical Experiments}
\label{sec:numerical}

In this section, we use models with strongly monotone operators to test the performance of %use an unconstrained linear model to illustrate the convergence of
the proposed extra-point approach, to be compared with other existing accelerated first-order methods under both VI and optimization settings. We conduct two experiments under the VI setting, with the first one being unconstrained ($\mathcal{Z}=\mathbb{R}^n$) and the second being constrained by $z\ge0$ ($\mathcal{Z}=\mathbb{R}^n_{+}$), both with a {\it linear} operator $F(z)$. Note that in the unconstrained experiment we are equivalently solving a linear equation system $F(z^*)=0$, whereas in the constrained experiment the problem is equivalent to an LCP problem:
\[
z^*\ge 0,\quad F(z^*)\ge0,\quad (z^*)^\top F(z^*)=0.
\]
The linear strongly monotone operator $F(z)$ is set to be the same for both unconstrained and constrained cases, and is designed as the following:
\begin{eqnarray}
    \label{linear-operator}
    F(z) = Mz+q,
\end{eqnarray}
where $M=Q+A$ is the summation of a positive diagonal matrix $Q$ and a skew-symmetric matrix $A$, therefore a strongly monotone operator. The problem size in our experiment is $n=20$, and the matrices $Q,A,q$ are randomly generated. % from Matlab random function.
In particular, the diagonal elements of $Q$ are generated such that they have different orders of magnitude in size and the corresponding parameter $\sigma$ is in the order $10^{-2}$.

The parameters for each method are tuned to their best performance in a certain range. The specific numbers are shown in the following table, where the first number of each parameter corresponds to the unconstrained case and the second corresponds to the constrained case:
\begin{table}[h!]
\centering
{\footnotesize
\begin{tabular}{||l||c|c|c|c|c||} \hline \hline
{\it  First-order Method  }      & $\alpha$ & $\beta$ & $\eta$  & $\gamma$  & $\tau$  \\ \hline \hline
vanilla projection            & $0.0095$      / $0.0235$ &  0      &   0     &   0     &   0    \\ \hline
``heavy-ball''                & $0.0119$ / $0.0188$ &  0      &   0    &   $0.0365$ / $0.0146$    &   0      \\ \hline
extra-gradient                & $0.021$      / $0.034$ &  0        &   $0.021$  / $0.034$ &    0      &   0      \\ \hline
Nesterov's method             & $0.0084$      / $0.0146$ &  $0.175$     &   0     &    $0.175$    &   0      \\ \hline
OGDA                          & $0.019$      / $0.024$ &  0        &   0     &    0      &  $0.0117$ / $0.0234$    \\ \hline
extra-point method                           & $0.021$ / $0.034$     &  $0.3276$ / $0.34$      &   $0.0202$ / $0.0323$   &    $0.3276$ / $0.34$    &  $0.0021$ / $0.0068$   \\
\hline \hline
\end{tabular}
}
\caption{\small Parameter choices for different first-order methods for VI (unconstrained/constrained)}
\end{table}

We refer Table \ref{table:methods} for the parameters in the dynamics of each respective method. Note that in the constrained case, we use the domain-restricted variant for both the extra-gradient method \eqref{ex-grad-2} and the extra-point method \eqref{update:extra-point-2}. Figure \ref{fig:VI} shows the convergence behavior for the methods introduced in the above table. For the unconstrained case (the left plot in Figure \ref{fig:VI}), we use norm of the operator $\|F(z)\|$ as our measurement of convergence (merit function). On the other hand, we use $|z^\top F(z)|$ as our measurement of convergence for the constrained case (the right plot in Figure \ref{fig:VI}). Both measurements are displayed in log-scale.  In both experiments, the proposed extra-point approach has a superior convergence rate after fine-tuning the parameters, followed by extra-gradient and OGDA methods. The heavy-ball method, vanilla projection, and Nesterov's method have similar performance.

We also conduct two experiments under the optimization settings, with the first experiment having a {\it quadratic} objective function (i.e. a linear model) and the second having a non-quadratic objective function. Both models are {\it unconstrained}. In the quadratic case, we use the same linear form of \eqref{linear-operator} but with
\[
M = \sum\limits_{i=1}^nQ(i,i)*u_i*u_i^\top,
\]
where $n=20$ and $u_i$'s are randomly generated orthonormal vectors. The constant is $\sigma=0.0024$ for the experiment. Solving the equation system $F(z^*)=0$ is equivalent to solving a strongly convex quadratic minimization problem. In the non-quadratic case, we use the following {\it regularized logistic regression} model:
\[
\min\limits_{x}\frac{1}{N}\sum\limits_{i=1}^N\ln(1+e^{-a_i^\top x})+\frac{\lambda}{2}\|x\|^2.
\]
We set $N=2$, $\lambda=0.005$ in our experiment, where the problem size $n=15$ and $a_i$'s randomly generated. Note that although the objection function is strongly convex with modulus $\mu=\lambda=0.005$, the Lipschitz constant needs to be estimated for each of the method and it is accomplished by manually tuning the parameters.

In this experiment, we compare the extra-point method for optimization \eqref{extra-points} (setting $y^k=p^k$) with other first-order methods. The specific numbers for the parameters used are shown in Table \ref{table:quad-opt-par}. In the first table, the first number of each parameter corresponds to the quadratic case, whereas the second corresponds to the non-quadratic case. The parameters for the extra-point method are separated by rows as shown in the second table.
\begin{table}[h!]
\centering
{\small
\begin{tabular}{||l||c|c|c|c|c||} \hline \hline
{\it  First-order Method  }      & $\alpha$ & $\beta$& $\eta$  & $\gamma$  & $\tau$  \\ \hline \hline
gradient descent            & $0.0407$      / $38.4615$ &  0      &   0     &    0      &   0      \\ \hline
``heavy-ball''                & $0.0717$      / $9.8765$ &  0      &   0     &   $0.8349$ / $0.7778$    &   0      \\ \hline
extra-gradient                & $0.021$      / $19.7$ &  0        &   $0.021$  / $19.7$ &    0      &   0      \\ \hline
Nesterov's method             & $0.0214$      / $28.5714$ &  $0.9075$ / $0.455$    &   0     &    $0.9075$ / $0.455$   &   0      \\ \hline
OGDA                          & $0.0387$      / $39.2$ &  0        &   0     &    0      &  $0.002$ / $0.2$    \\
\hline \hline
\end{tabular}
}

\vspace{3mm}
{\small
\begin{tabular}{||l||c|c|c|c|c|c|c|c|c||}
\hline \hline
extra-point method & $t_1$ & $t_2$ & $t_3$& $t_4$& $t_5$& $t_6$& $t_7$& $t_8$& $t_9$\\ \hline \hline
quadratic  & $0.9538$ & $1-t_1$ & $0.9$& $0.0277$& $6.3712$& $6.9252$& $1-\sqrt{\sigma}$& $\sqrt{\sigma}$& $0.0485$\\\hline
non-quadratic & $0.7363$ & $1-t_1$ & $0.9$ & $0.0277$ & $5.5402$ & $6.6482$ & $0.6419$ & $1-t_7$ & $71.6115$\\
\hline
\hline
\end{tabular}
}
\caption{\small Parameter choices for different first-order methods for optimization (quadratic/non-quadratic)}
\label{table:quad-opt-par}
\end{table}

\begin{figure}[htbp]
\centering
\begin{minipage}[t]{0.48\textwidth}
\centering
\includegraphics[width=6cm]{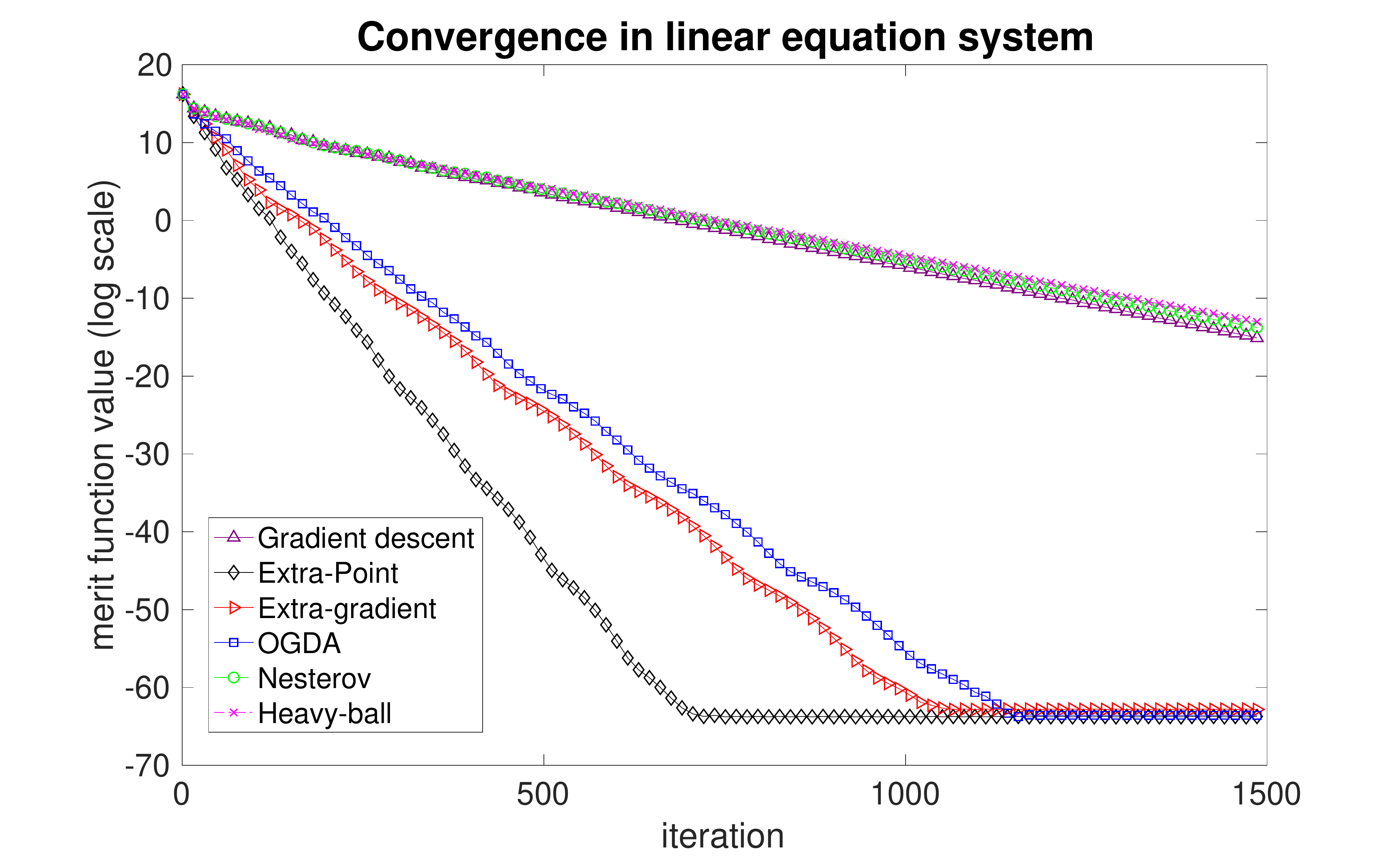}
%\caption{\small Convergence in strongly monotone variational inequality}
\label{fig:VI-quad}
\end{minipage}
\begin{minipage}[t]{0.48\textwidth}
\centering
\includegraphics[width=6cm]{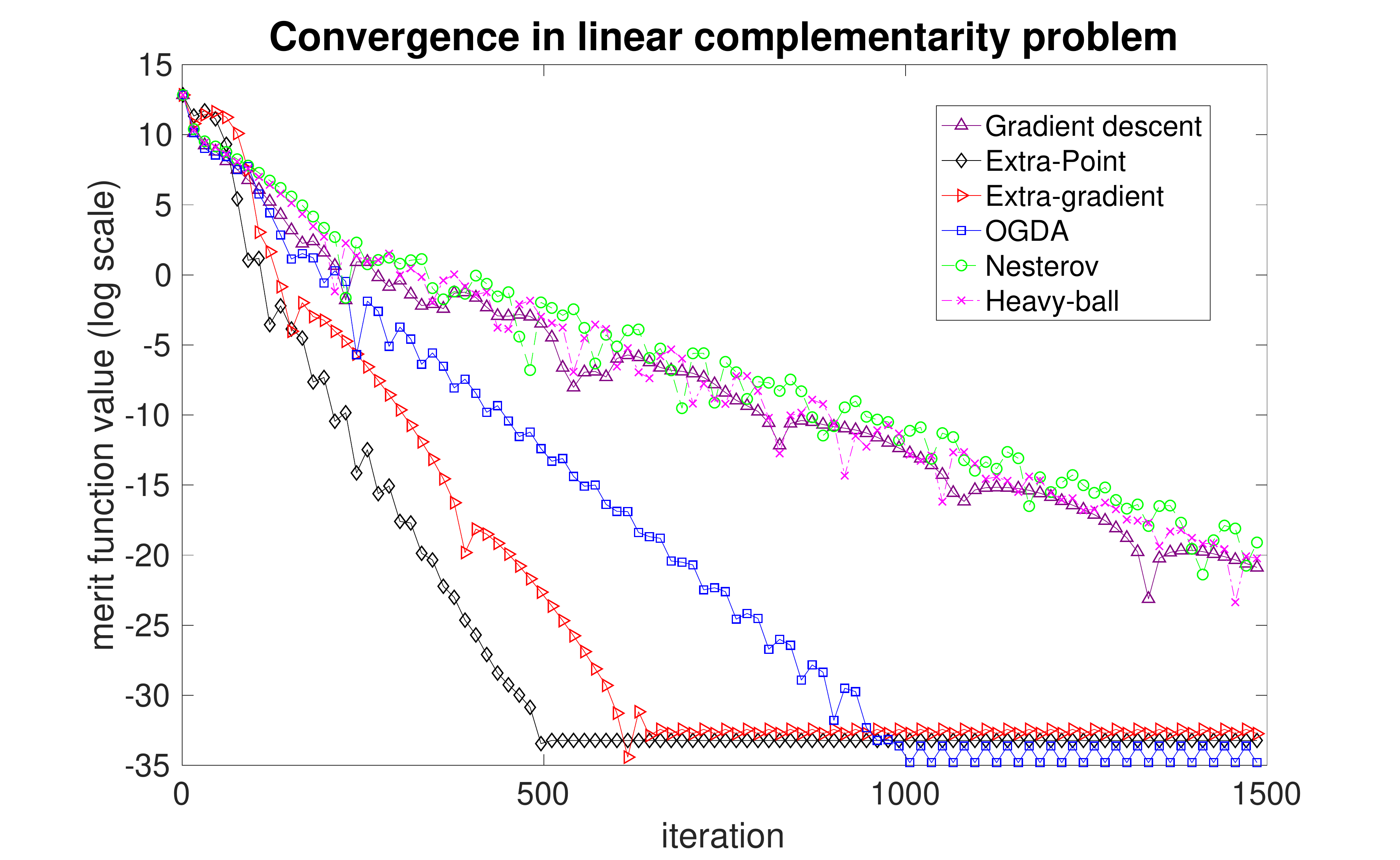}
%\caption{\small Convergence in strongly convex quadratic optimization}
\label{fig:opt-quad}
\end{minipage}
\caption{Convergence with strongly monotone operator}
\label{fig:VI}
\end{figure}
\begin{figure}[htbp]
\centering
\begin{minipage}[t]{0.48\textwidth}
\centering
\includegraphics[width=6cm]{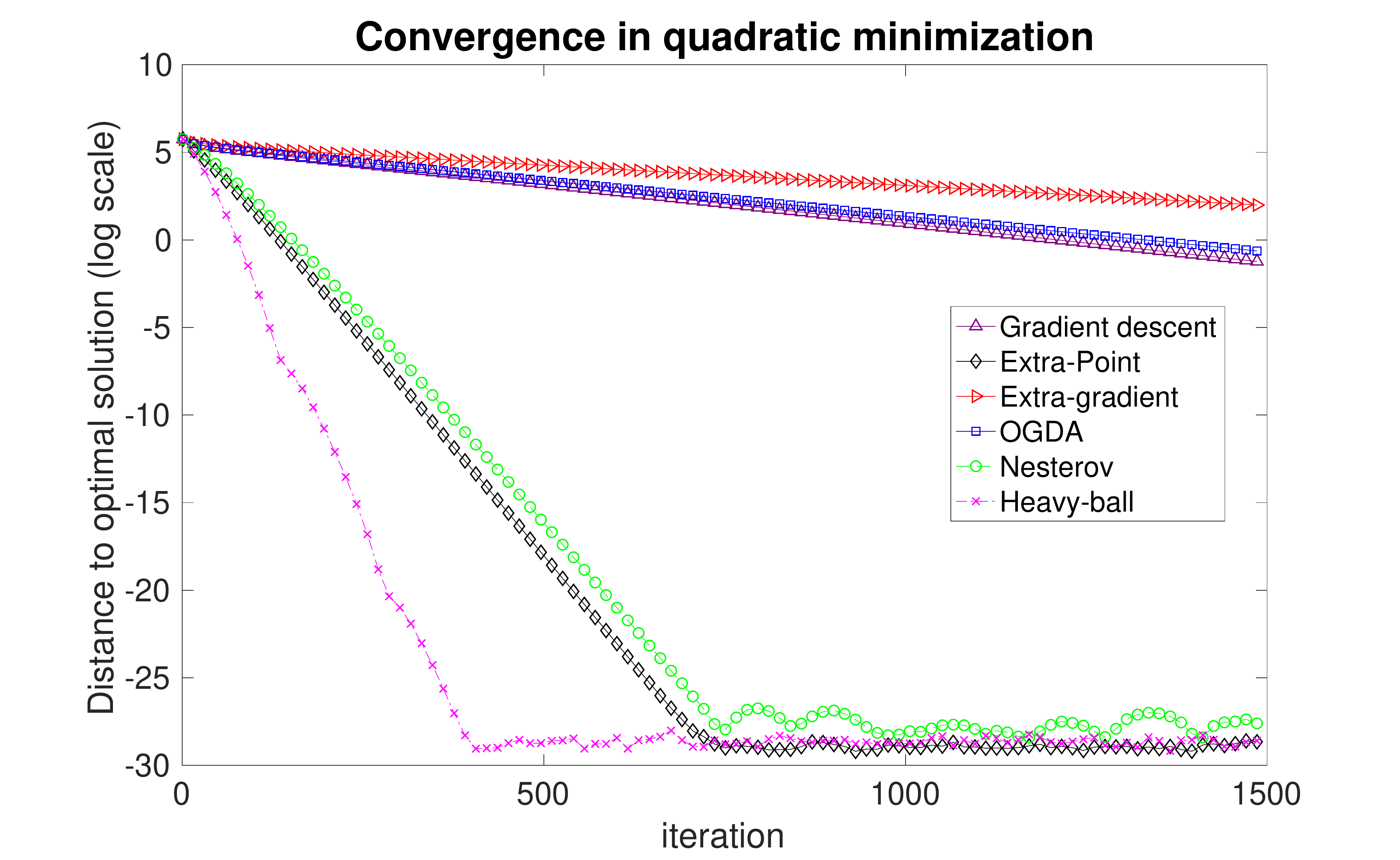}
%\caption{\small Convergence in non-linear VI}
\label{fig:VI-log}
\end{minipage}
\begin{minipage}[t]{0.48\textwidth}
\centering
\includegraphics[width=6cm]{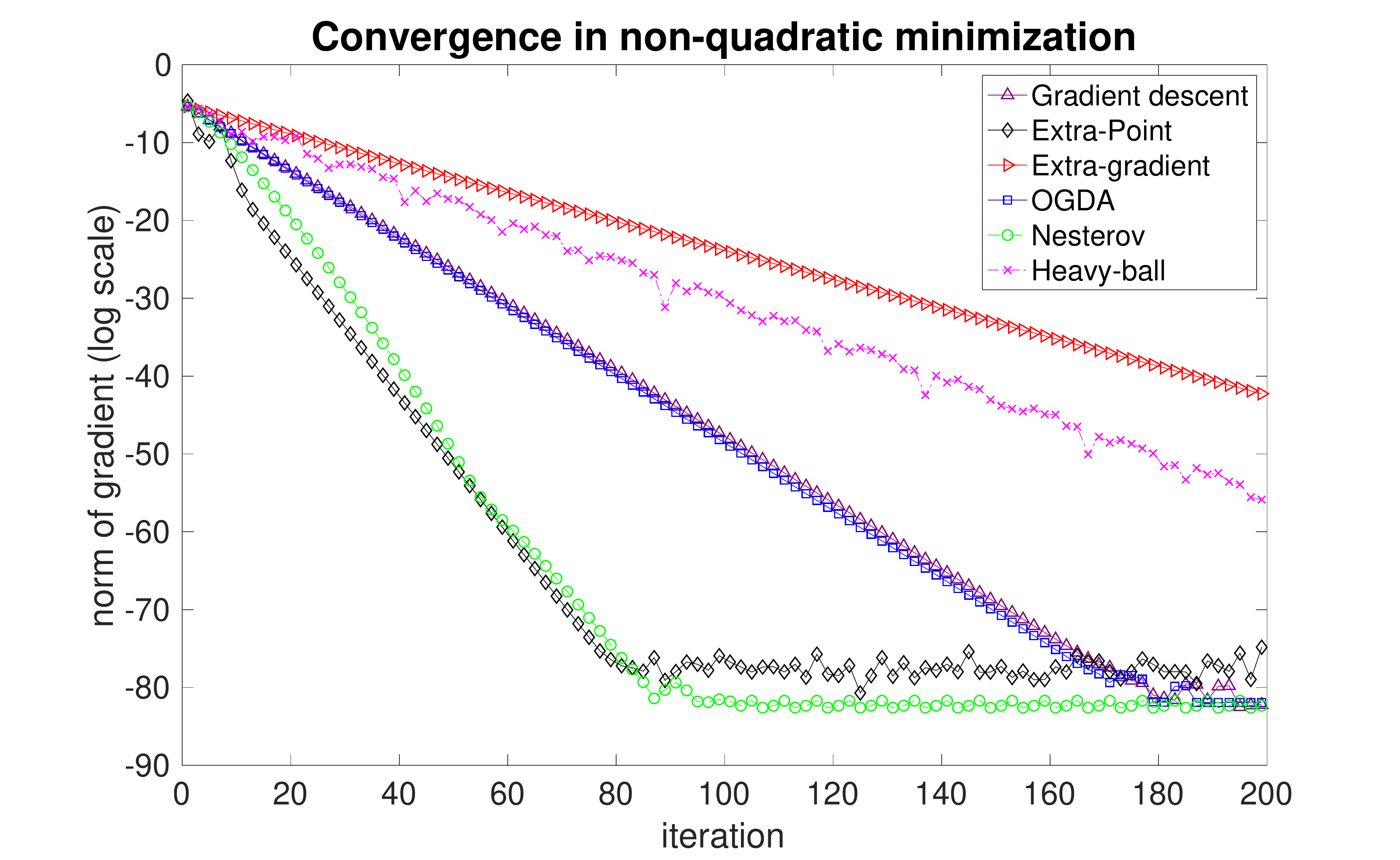}
%\caption{\small Convergence in strongly convex non-quadratic optimization}
\label{fig:opt-log}
\end{minipage}
\caption{Convergence in strongly convex optimization }
\label{fig:opt}
\end{figure}

The convergence results for each method are shown in Figure \ref{fig:opt}. The progress is measured as the distance to the optimal solution (from solving the linear equation system directly) for the quadratic case, and is measured as norm of gradient for the non-quadratic case. Under the quadratic setting (the left plot in Figure \ref{fig:opt}), the heavy-ball method has the best performance, followed by Nesterov's method and the extra-point method.  %Note that since the extra-point approach includes all the other 5 methods as special cases, its performance can Note that the parameter choice for the extra-point method is made very different from Nesterov's method and yet the method yields a slightly better performance.
%We remark that Nesterov's method implemented here takes the reduced form \eqref{Nes-reduced-form}, and so the parameter choices of these two methods are very different.
Other than the afore-mentioned three methods which demonstrate clear acceleration, the extra-gradient method, gradient descent, and OGDA have similar slower convergence. Under the non-quadratic setting (the right plot in Figure \ref{fig:opt}), Nesterov's method and the extra-point method have similar leading performance (note that the parameter choices for both methods are quite different), followed by gradient descent and OGDA. The heavy-ball method and the extra-gradient method demonstrate slower convergence in this experiment.

\section{Conclusion}
\label{sec:conclusion}
In this paper, we propose a unifying framework of accelerated first-order methods for solving strongly monotone VI. The proposed extra-point approach is an inclusive framework that can specialize to the extra-gradient method, OGDA, Nesterov's accelerated method, and the heavy-ball method, with proper parameter choices. We also extend the framework to the context of strongly convex optimization.
By means of deriving the worst-case iteration bounds
and conducting preliminary numerical experiments,
we show that acceleration can be made rather flexible, not necessarily restricting to the exact form of any specific methods, such as extra-gradient method for VI or the Nesterov-style extrapolations for optimization. Other first-order directions/ideas such as ``heavy-ball'', ``optimism'', etc., may contribute their fair shares to acceleration in practice too. For achieving superior numerical performances, the question remains: How do we find appropriate doses for these terms in a ``cocktail'' implementation? We believe that the answer might be: {\it It depends}. It depends on the structure of the problem at hand. A reasonable approach would be to {\it learn}\/ a good combination of the terms through experimenting with the training data for the given class of problems at hand. In this sense, optimization helps machine learning, while machine learning can also help enhancing optimization in return. After all, to provide a flexible ground for the learning to be possible is the primary purpose behind the proposed new scheme.

\printbibliography
% \bibliographystyle{acm}
% \bibliography{plan}

\begin{appendices}
\section{Proofs of Propositions and Theorems}
\subsection{Proof of Theorem \ref{th:ogda}}
\label{app:ogda}

Let us derive the chain of inequalities, where the underlined terms highlight some changes between equalities/inequalities for easier following:
\begin{eqnarray*}
         & & \|z^{k+1}-z^*\|^2 \\
         & = & \|P_{\mathcal{Z}}\left(z^k-\alpha F(z^k)-\tau(F(z^k)-F(z^{k-1}))\right)-P_{\mathcal{Z}}\left(z^*-\alpha F(z^*)\right)\|^2  \\
         & \leq & (z^{k+1}-z^*)^{\top}(z^k-\alpha F(z^k)-\tau(F(z^k)-F(z^{k-1}))-z^*+\alpha F(z^*)) \\
         & = & (z^{k+1}-z^*)^{\top}(z^k-z^*)-\alpha(z^{k+1}-z^*)^{\top}(F(z^k)-F(z^*))-\tau(z^{k+1}-z^*)^{\top}(F(z^k)-F(z^{k-1})) \\
         & = & \frac{1}{2}\left(\|z^{k+1}-z^*\|^2+\|z^k-z^*\|^2-\|z^{k+1}-z^k\|^2\right)-\tau(z^{k+1}-z^*)^{\top}(F(z^k)-F(z^{k-1})) \\
         && -\alpha(z^{k+1}-z^*)^{\top}(F(z^k)-F(z^{k+1}))-\underline{\alpha(z^{k+1}-z^*)^{\top}(F(z^{k+1})-F(z^*))}\\
         & \leq & \frac{1}{2}\left(\|z^{k+1}-z^*\|^2+\|z^k-z^*\|^2-\|z^{k+1}-z^k\|^2\right)-\tau(z^{k+1}-z^*)^{\top}(F(z^k)-F(z^{k-1})) \\
         && -\alpha(z^{k+1}-z^*)^{\top}(F(z^k)-F(z^{k+1}))-\underline{\alpha\mu\|z^{k+1}-z^*\|^2} \\
         & = & \frac{1}{2}\left(\|z^{k+1}-z^*\|^2+\|z^k-z^*\|^2-\|z^{k+1}-z^k\|^2\right) \\
         && -\alpha(z^{k+1}-z^*)^{\top}(F(z^k)-F(z^{k+1}))-\alpha\mu\|z^{k+1}-z^*\|^2 \\
         &&-\underline{\tau(z^{k+1}-z^k)^{\top}(F(z^k)-F(z^{k-1}))}-\tau(z^{k}-z^*)^{\top}(F(z^k)-F(z^{k-1}))\\
         & \leq & \frac{1}{2}\left(\|z^{k+1}-z^*\|^2+\|z^k-z^*\|^2-\|z^{k+1}-z^k\|^2\right) \\
         && -\alpha(z^{k+1}-z^*)^{\top}(F(z^k)-F(z^{k+1}))-\alpha\mu\|z^{k+1}-z^*\|^2 \\
         &&+\underline{\frac{L\tau}{2}\|z^{k+1}-z^k\|^2+\frac{L\tau}{2}\|z^k-z^{k-1}\|^2}-\tau(z^{k}-z^*)^{\top}(F(z^k)-F(z^{k-1}))\\
         & \leq & \frac{1}{2}\left(\|z^{k+1}-z^*\|^2+\|z^k-z^*\|^2-\|z^{k+1}-z^k\|^2\right) \\
         && -\alpha(z^{k+1}-z^*)^{\top}(F(z^k)-F(z^{k+1}))-\alpha\mu\|z^{k+1}-z^*\|^2 .
\end{eqnarray*}

Rearranging the above (last) inequality we get:
\begin{eqnarray}
    \label{ogda-heavy-result-1}
         & & \left(\frac{1}{2}+\alpha\mu\right)\|z^{k+1}-z^*\|^2+\alpha(z^{k+1}-z^*)^{\top}(F(z^k)-F(z^{k+1}))+\left(\frac{1}{2}-\frac{L\tau}{2}\right)\|z^{k+1}-z^k\|^2 \nonumber \\
         &\leq&  \frac{1}{2}\|z^k-z^*\|^2+\tau(z^k-z^*)^{\top}(F(z^{k-1})-F(z^k))+\frac{L\tau}{2}\|z^k-z^{k-1}\|^2.
\end{eqnarray}
One can see that in the above inequality, the iteration counts of the three terms on the LHS and the iteration counts of the three terms on the RHS differ exactly by one, repsctively. The convergence then rely on the ratio between each corresponding term on the LHS/RHS by choosing the parameters appropriately.

Let us choose
   $ \alpha=\frac{1}{2L}$ and  $\tau=\frac{\alpha}{1+\sigma}=\frac{1}{2L}\cdot \frac{1}{1+\sigma}$.
Then we have:
\begin{eqnarray}
    \label{ogda-heavy-result-2}
& & \left(\frac{1}{2}+\frac{\sigma}{2}\right)\|z^{k+1}-z^*\|^2+\frac{1}{2L}(z^{k+1}-z^*)^{\top}(F(z^k)-F(z^{k+1}))+\left(\frac{1}{2}-\frac{1}{4}\cdot\frac{1}{1+\sigma}\right)\|z^{k+1}-z^k\|^2 \nonumber \\
         &\overset{\eqref{ogda-heavy-result-1}}{\leq}& \frac{1}{2}\|z^k-z^*\|^2+\frac{1}{2L}\cdot\frac{1}{1+\sigma}(z^k-z^*)^{\top}(F(z^{k-1})-F(z^k))+\underline{\frac{1}{4}\cdot\frac{1}{1+\sigma}}\|z^k-z^{k-1}\|^2 \nonumber \\
         &\leq&  \frac{1}{2}\|z^k-z^*\|^2+\frac{1}{2L}\cdot\frac{1}{1+\sigma}(z^k-z^*)^{\top}(F(z^{k-1})-F(z^k))+\underline{\left(\frac{1}{2}-\frac{1}{4}\cdot\frac{1}{1+\sigma}\right)\cdot\frac{1}{1+\sigma}}\|z^k-z^{k-1}\|^2 \nonumber \\
         &=& \frac{1}{2}V_k.
\end{eqnarray}

Observe that the LHS of \eqref{ogda-heavy-result-2} is exactly $(1+\sigma)$ times that of the RHS.
Thus, we have
\begin{equation*}
    \left(1+\sigma\right)V_{k+1}\leq V_k.
\end{equation*}

Finally, by noting the second term of $V_k$ can be bounded by the following:
\begin{eqnarray*}
         (z^k-z^*)^{\top}(F(z^{k-1})-F(z^k)) & \geq & -L\|z^k-z^*\|\cdot\|z^{k-1}-z^*\|  \\
         & \geq & -\frac{L}{2}\|z^k-z^*\|^2-\frac{L}{2}\|z^{k-1}-z^k\|^2,
\end{eqnarray*}
we have
\begin{equation*}
    V_k\geq \frac{1}{2}\|z^k-z^*\|^2,
\end{equation*}
which eventually gives us:
\begin{equation*}
    \frac{1}{2}\|z^k-z^*\|^2\leq V_k\leq \left(1+\sigma\right)^{-k}V_0=\left(1+\sigma\right)^{-k}\|z^0-z^*\|^2.
\end{equation*}

\subsection{Proof of Lemma \ref{lem:extra-per-iter-conv}}
\label{app:extra-per-iter-conv}

We derive the chain of inequalities (the underlined terms highlight the changes between equalities/inequalities to help follow the derivation):
\begin{eqnarray*}
         & & \|z^{k+1}-z^*\|^2 \\
         & = & \|P_{\mathcal{Z}}\left(z^k-\alpha F(z^{k+0.5})+\gamma(z^k-z^{k-1})-\tau(F(z^{k})-F(z^{k-1}))\right)-P_{\mathcal{Z}}(z^*)\|^2  \\
         & \leq & \|z^k-\alpha F(z^{k+0.5})+\gamma(z^k-z^{k-1})-\tau(F(z^{k})-F(z^{k-1}))-z^*\|^2 \\
         & = & \|z^k-z^*-\alpha (F(z^{k+0.5})-F(z^k)+\underline{F(z^k)})+\gamma(z^k-z^{k-1})-\tau(F(z^{k})-F(z^{k-1}))\|^2 \\
         & = & \|z^k-z^*-\alpha (F(z^{k+0.5})-F(z^k))-\underline{\frac{\alpha}{\eta}(z^k-z^{k+0.5}+\beta(z^k-z^{k-1}))}+\gamma(z^k-z^{k-1}) \\
         && -\tau(F(z^{k})-F(z^{k-1}))\|^2 \\
         & = & \|z^k-z^*-\alpha (F(z^{k+0.5})-F(z^k))-\frac{\alpha}{\eta}(z^k-z^{k+0.5})+\left(\gamma-\frac{\alpha\beta}{\eta}\right)(z^k-z^{k-1})-\tau(F(z^{k})-F(z^{k-1}))\|^2\\
         & = & \|z^k-z^*\|^2+\underline{\alpha^2\|F(z^{k+0.5})-F(z^k)\|^2+\frac{\alpha^2}{\eta^2}\|z^k-z^{k+0.5}\|^2}+\underline{\left(\gamma-\frac{\alpha\beta}{\eta}\right)^2\|z^k-z^{k-1}\|^2} \\
         && +\underline{\tau^2\|F(z^k)-F(z^{k-1})\|^2} \\
         && +\underline{2\gamma(z^k-z^*)^{\top}(z^k-z^{k-1})}-\underline{2\alpha(z^k-z^*)^{\top}F(z^{k+0.5})}-\underline{2\tau(z^k-z^*)^{\top}(F(z^{k})-F(z^{k-1}))}\\
         & & +\underline{2\alpha\tau\left(F(z^{k+0.5})-F(z^k)\right)^{\top}\left(F(z^k)-F(z^{k-1})\right)}+\frac{2\alpha^2}{\eta}(F(z^{k+0.5})-F(z^k))^{\top}(z^k-z^{k+0.5}) \\
         & & -2\alpha\left(\gamma-\frac{\alpha\beta}{\eta}\right)(F(z^{k+0.5})-F(z^k))^{\top}(z^k-z^{k-1})-\frac{2\alpha}{\eta}\left(\gamma-\frac{\alpha\beta}{\eta}\right)(z^k-z^{k+0.5})^{\top}(z^k-z^{k-1}) \\
         && +\underline{\frac{2\alpha\tau}{\eta}\left(F(z^k)-F(z^{k-1})\right)^{\top}(z^k-z^{k+0.5})}-2\tau\left(\gamma-\frac{\alpha\beta}{\eta}\right)\left(F(z^k)-F(z^{k-1})\right)^{\top}(z^k-z^{k-1}) \\
         & \leq & \|z^k-z^*\|^2+\underline{\alpha^2\left(L^2+\frac{1}{\eta^2}\right)\|z^k-z^{k+0.5}\|^2}+\underline{\left(\tau^2L^2+\left(\gamma-\frac{\alpha\beta}{\eta}\right)^2\right)\|z^k-z^{k-1}\|^2} \\
         && +\underline{\gamma\left(\|z^k-z^*\|^2+\|z^k-z^{k-1}\|^2-\|z^{k-1}-z^*\|^2\right)}+\underline{\tau L\left(\|z^k-z^*\|^2+\|z^k-z^{k-1}\|^2\right)}\\
         && \underline{-2\alpha(z^k-z^{k+0.5})^{\top}F(z^{k+0.5})-\alpha\mu\|z^k-z^*\|^2+2\alpha\mu\|z^k-z^{k+0.5}\|^2} \\
         & & +\underline{\alpha\tau L^2\left(\|z^{k+0.5}-z^k\|^2+\|z^k-z^{k-1}\|^2\right)}+\frac{2\alpha^2}{\eta}(F(z^{k+0.5})-F(z^k))^{\top}(z^k-z^{k+0.5}) \\
         & & -2\alpha\left(\gamma-\frac{\alpha\beta}{\eta}\right)(F(z^{k+0.5})-F(z^k))^{\top}(z^k-z^{k-1})-\frac{2\alpha}{\eta}\left(\gamma-\frac{\alpha\beta}{\eta}\right)(z^k-z^{k+0.5})^{\top}(z^k-z^{k-1}) \\
         && + \underline{\frac{\alpha\tau L}{\eta}
\left(\|z^k-z^{k-1}\|^2+\|z^k-z^{k+0.5}\|^2\right)}-2\tau\left(\gamma-\frac{\alpha\beta}{\eta}\right)\left(F(z^k)-F(z^{k-1})\right)^{\top}(z^k-z^{k-1}) .
\end{eqnarray*}
Using
\begin{equation*}
    F(z^{k+1/2})=F(z^{k+1/2})-F(z^k)+\frac{1}{\eta}(z^k-z^{k+1/2}+\beta(z^k-z^{k-1})),
\end{equation*}
we have:
\begin{eqnarray*}
    & & -2\alpha(z^{k+0.5}-z^k)^{\top}F(z^{k+0.5}) \\
    &=& -2\alpha(z^k-z^{k+0.5})^{\top}\left[F(z^{k+0.5})-F(z^k)+\frac{1}{\eta}(z^k-z^{k+0.5}+\beta(z^k-z^{k-1}))\right] .
\end{eqnarray*}

Combining and rearranging we obtain
\begin{eqnarray*}
        & & \|z^{k+1}-z^*\|^2 \\ & \leq & (1-\alpha\mu+\gamma+\tau L)\|z^k-z^*\|^2-\gamma\|z^{k-1}-z^*\|^2 \\
         && + \left(\alpha^2L^2+\frac{\alpha^2}{\eta^2}+\frac{\alpha\tau L}{\eta}-\frac{2\alpha}{\eta}+2\alpha\mu+\alpha\tau L^2\right)\|z^k-z^{k+0.5}\|^2 \\
         &&+ \left(\tau^2L^2+\left(\gamma-\frac{\alpha\beta}{\eta}\right)^2+\gamma+\tau L+\frac{\alpha\tau L}{\eta}+\alpha\tau L^2\right)\|z^k-z^{k-1}\|^2 \\
         && + \left(-2\alpha+\frac{2\alpha^2}{\eta}\right)(z^k-z^{k+0.5})^{\top}\left(F(z^{k+0.5})-F(z^k)\right) \\
         &&+ \left(-\frac{2\alpha\beta}{\eta}-\frac{2\alpha}{\eta}\left(\gamma-\frac{\alpha\beta}{\eta}\right)\right)(z^k-z^{k+0.5})^{\top}(z^k-z^{k-1}) \\
         && -2\alpha\left(\gamma-\frac{\alpha\beta}{\eta}\right)(F(z^{k+0.5})-F(z^k))^{\top}(z^k-z^{k-1}) \\
         && -2\tau\left(\gamma-\frac{\alpha\beta}{\eta}\right)\left(F(z^k)-F(z^{k-1})\right)^{\top}(z^k-z^{k-1}) .
\end{eqnarray*}

With the inequality
\begin{equation*}
    2(z^k-z^{k+0.5})^{\top}(z^k-z^{k-1})\leq \|z^k-z^{k+0.5}\|^2+\|z^k-z^{k-1}\|^2,
\end{equation*}
the above bound further reduces to:
\begin{eqnarray*}
        & & \|z^{k+1}-z^*\|^2 \\ & \leq & (1-\alpha\mu+\gamma+\tau L)\|z^k-z^*\|^2-\gamma\|z^{k-1}-z^*\|^2 \\
         && + \left(\alpha^2L^2+\frac{\alpha^2}{\eta^2}+\frac{\alpha\tau L}{\eta}-\frac{2\alpha}{\eta}+2\alpha\mu+\alpha\tau L^2+\left|-\frac{\alpha\beta}{\eta}-\frac{\alpha}{\eta}\left(\gamma-\frac{\alpha\beta}{\eta}\right)\right|\right)\|z^k-z^{k+0.5}\|^2 \\
         &&+ \left(\tau^2L^2+\left(\gamma-\frac{\alpha\beta}{\eta}\right)^2+\gamma+\tau L+\frac{\alpha\tau L}{\eta}+\alpha\tau L^2+\left|-\frac{\alpha\beta}{\eta}-\frac{\alpha}{\eta}\left(\gamma-\frac{\alpha\beta}{\eta}\right)\right|\right)\|z^k-z^{k-1}\|^2 \\
         && + \left(-2\alpha+\frac{2\alpha^2}{\eta}\right)(z^k-z^{k+0.5})^{\top}\left(F(z^{k+0.5})-F(z^k)\right) \\
         && -2\alpha\left(\gamma-\frac{\alpha\beta}{\eta}\right)(F(z^{k+0.5})-F(z^k))^{\top}(z^k-z^{k-1}) \\
         && -2\tau\left(\gamma-\frac{\alpha\beta}{\eta}\right)\left(F(z^k)-F(z^{k-1})\right)^{\top}(z^k-z^{k-1}) .
\end{eqnarray*}

Finally, with
\begin{equation*}
    \|z^k-z^{k-1}\|^2\leq 2\|z^k-z^*\|^2+2\|z^{k-1}-z^*\|^2,
\end{equation*}
the bound reduces to
\begin{eqnarray*}
        & & \|z^{k+1}-z^*\|^2 \\
        & \leq & \left(1-\alpha\mu+3\gamma+\tau L(3+2\tau L+\frac{2\alpha}{\eta}+2\alpha L)+2\left(\gamma-\frac{\alpha\beta}{\eta}\right)^2+\left|-\frac{2\alpha\beta}{\eta}-\frac{2\alpha}{\eta}\left(\gamma-\frac{\alpha\beta}{\eta}\right)\right|\right)\|z^k-z^*\|^2 \\
         &&+\left(2\left(\gamma-\frac{\alpha\beta}{\eta}\right)^2+\gamma+2\tau L(1+\tau L+\frac{\alpha}{\eta}+\alpha L)+\left|-\frac{2\alpha\beta}{\eta}-\frac{2\alpha}{\eta}\left(\gamma-\frac{\alpha\beta}{\eta}\right)\right|\right)\|z^{k-1}-z^*\|^2 \\
         && + \left(\alpha^2L^2+\frac{\alpha^2}{\eta^2}+\frac{\alpha\tau L}{\eta}-\frac{2\alpha}{\eta}+2\alpha\mu+\alpha\tau L^2+\left|-\frac{\alpha\beta}{\eta}-\frac{\alpha}{\eta}\left(\gamma-\frac{\alpha\beta}{\eta}\right)\right|\right)\|z^k-z^{k+0.5}\|^2 \\
         && + \left(-2\alpha+\frac{2\alpha^2}{\eta}\right)(z^k-z^{k+0.5})^{\top}\left(F(z^{k+0.5})-F(z^k)\right) \\
         && -2\alpha\left(\gamma-\frac{\alpha\beta}{\eta}\right)(F(z^{k+0.5})-F(z^k))^{\top}(z^k-z^{k-1}) \\
         && -2\tau\left(\gamma-\frac{\alpha\beta}{\eta}\right)\left(F(z^k)-F(z^{k-1})\right)^{\top}(z^k-z^{k-1}) .
\end{eqnarray*}

\subsection{Proof of Lemma \ref{lem:res-extra-per-iter-conv}}
\label{app:res-extra-per-iter-conv}
Let us first present an inequality that we need to use in the proof, which is derived from the optimality condition of the update $z^{k+0.5}$. We have
\begin{equation*}
    \langle z^{k+0.5}-z^k+\eta F(z^k)-\beta(z^k-z^{k-1}),z-z^{k+0.5}\rangle \geq 0,\quad\forall z\in\mathcal{Z}.
\end{equation*}
By rearranging the terms we get
\begin{eqnarray}
    \label{opt-z-0.5}
        & &  \langle -\eta F(z^k)+\beta(z^k-z^{k-1}),z-z^{k+0.5}\rangle \nonumber \\
        & \leq & (z^{k+0.5}-z^k)^{\top}(z-z^{k+0.5})  \nonumber \\
         & = & \frac{1}{2}\left(\|z-z^k\|^2-\|z^{k+0.5}-z^k\|^2-\|z-z^{k+0.5}\|^2\right),\quad\forall z \in\mathcal{Z}.
\end{eqnarray}

We are now ready to establish the convergence analysis of the extra-point scheme with restricted domain:
\begin{eqnarray*}
       & & \|z^{k+1}-z^*\|^2 \\ & = & \|P_{\mathcal{Z}}\left(z^k-\alpha F(z^{k+0.5})+\gamma(z^k-z^{k-1})-\tau\left(F(z^{k})-F(z^{k-1})\right)\right)-P_{\mathcal{Z}}(z^*)\|^2 \\
         & \leq & (z^{k+1}-z^*)^{\top}\left[z^k-\alpha F(z^{k+0.5})+\gamma(z^k-z^{k-1})-\tau\left(F(z^{k})-F(z^{k-1})\right)-z^*\right] \\
         & = & \frac{1}{2}\|z^{k+1}-z^*\|^2+\frac{1}{2}\|z^k-z^*\|^2-\frac{1}{2}\|z^{k+1}-z^k\|^2-\tau(z^{k+1}-z^*)^{\top}\left(F(z^{k})-F(z^{k-1})\right) \\
         && +[-\alpha F(z^{k+0.5})+\gamma(z^k-z^{k-1})]^{\top}(z^{k+1}-z^*).
\end{eqnarray*}
We shall bound the last term with the following:
\begin{eqnarray*}
        & &  [-\alpha F(z^{k+0.5})+\gamma(z^k-z^{k-1})]^{\top}(z^{k+1}-z^*) \\
        & = & [-\alpha F(z^{k+0.5})+\gamma(z^k-z^{k-1})]^{\top}(z^{k+1}-z^{k+0.5}+z^{k+0.5}-z^*)  \\
         & = & \underbrace{[-\eta F(z^{k})+\beta(z^k-z^{k-1})]^{\top}(z^{k+1}-z^{k+0.5})}_{(a)} \\
         && + \underbrace{[-\alpha F(z^{k+0.5})+\eta F(z^k)+(\gamma-\beta)(z^k-z^{k-1})]^{\top}(z^{k+1}-z^{k+0.5})}_{(b)} \\
         && +\underbrace{[-\alpha F(z^{k+0.5})+\gamma(z^k-z^{k-1})]^{\top}(z^{k+0.5}-z^*)}_{(c)},
\end{eqnarray*}
where
\begin{equation*}
    (a) \overset{\eqref{opt-z-0.5}}{\leq} \frac{1}{2}\|z^{k+1}-z^k\|^2-\frac{1}{2}\|z^{0.5}-z^k\|^2-\frac{1}{2}\|z^{k+1}-z^{k+0.5}\|^2,
\end{equation*}
and
\begin{eqnarray*}
         (b) & = & [-\alpha F(z^{k+0.5})+\alpha F(z^k)-\alpha F(z^k)+\eta F(z^k)+(\gamma-\beta)(z^k-z^{k-1})]^{\top}(z^{k+1}-z^{k+0.5})  \\
         & \leq & \frac{1}{2}\alpha L\|z^{k+0.5}-z^k\|^2+\frac{1}{2}\alpha L\|z^{k+1}-z^{k+0.5}\|^2 \\
         & & +(\eta-\alpha)F(z^k)^{\top}(z^{k+1}-z^{k+0.5})+(\gamma-\beta)(z^k-z^{k-1})^{\top}(z^{k+1}-z^{k+0.5}),
\end{eqnarray*}
and
\begin{eqnarray*}
         (c) & \leq & -\alpha\mu\|z^{k+0.5}-z^*\|^2+\gamma(z^k-z^{k-1})^{\top}(z^{k+0.5}-z^*)  \\
         & \leq & -\frac{1}{2}\alpha\mu\|z^{k}-z^*\|^2+\alpha\mu\|z^{k+0.5}-z^k\|^2+\gamma(z^k-z^{k-1})^{\top}(z^{k+0.5}-z^*).
\end{eqnarray*}

Using the bound for (a), (b), and (c), we have:
\begin{eqnarray*}
        \|z^{k+1}-z^*\|^2 & \leq& \|z^k-z^*\|^2-\|z^{k+1}-z^k\|^2-2\tau(z^{k+1}-z^*)^{\top}\left(F(z^{k})-F(z^{k-1})\right) + 2[(a)+(b)+(c)] \\
         & \leq & (1-\alpha\mu)\|z^k-z^*\|^2+(\alpha L-1)\|z^{k+1}-z^{k+0.5}\|^2\underbrace{-2\tau(z^{k+1}-z^*)^{\top}\left(F(z^{k})-F(z^{k-1})\right)}_{(i)}\\
         && +(\alpha L+2\alpha\mu-1)\|z^{k+0.5}-z^k\|^2+\underbrace{2\gamma(z^k-z^{k-1})^{\top}(z^{k+0.5}-z^*)}_{(ii)} \\
         && + 2(\eta-\alpha)F(z^k)^{\top}(z^{k+1}-z^{k+0.5})+\underbrace{2(\gamma-\beta)(z^k-z^{k-1})^{\top}(z^{k+1}-z^{k+0.5})}_{(iii)},
\end{eqnarray*}
where
\begin{eqnarray*}
    (i) &\leq& 2\tau L\|z^{k+1}-z^*\|\|z^{k}-z^{k-1}\| \\
    &\leq& \tau L\|z^{k+1}-z^*\|^2+\tau L\|z^{k}-z^{k-1}\|^2\leq \tau L\|z^{k+1}-z^*\|^2+2\tau L\left(\|z^{k}-z^{*}\|^2+\|z^{k-1}-z^*\|^2\right),
\end{eqnarray*}
and
\begin{eqnarray*}
         (ii) & \leq & \gamma\|z^k-z^{k-1}\|^2+\gamma\|z^{k+0.5}-z^*\|^2  \\
         & \leq & 2\gamma\left(\|z^k-z^*\|^2+\|z^{k-1}-z^*\|^2+\|z^{k+0.5}-z^k\|^2+\|z^k-z^*\|^2\right) \\
         & = & 2\gamma\left(2\|z^k-z^*\|^2+\|z^{k-1}-z^*\|^2+\|z^{k+0.5}-z^k\|^2\right),
\end{eqnarray*}
and
\begin{eqnarray*}
         (iii) & \leq & |\gamma-\beta|\left(\|z^k-z^{k-1}\|^2+\|z^{k+1}-z^{k+0.5}\|^2\right)  \\
         & \leq & |\gamma-\beta|\left(2\|z^k-z^*\|^2+2\|z^{k-1}-z^*\|^2+\|z^{k+1}-z^{k+0.5}\|^2\right).
\end{eqnarray*}
Combining the above three bounds, we finally arrive at \eqref{exp-1-ineq-result}.

\end{appendices}

\end{document}